\newtheorem{theorem}{Theorem}[section]
\newtheorem{corollary}[theorem]{Corollary}
\newtheorem{prop}[theorem]{Proposition}
\theoremstyle{remark}
\newtheorem{remark}[theorem]{Remark}
\theoremstyle{definition}
\newtheorem{defn}[theorem]{Definition}
\newtheorem{example}[theorem]{Example}
\numberwithin{equation}{section}
\numberwithin{theorem}{section}
\newcommand{\defeq}{\vcentcolon=}
\newcommand{\Psif}{\Psi_{\!f}}
\let\abs=\envert
\newcommand{\gonehat}{\widehat{g_1}}
\newcommand{\gtwohat}{\widehat{g_2}}
\newcommand{\hatT}{\hat{T}}
\newcommand{\fhat}{\hat f}
\newcommand{\ghat}{\hat g}
\newcommand{\hhat}{\hat h}
\newcommand{\psihat}{\hat\psi}
\newcommand{\psiahat}{\widehat{\psi_a}}
\newcommand{\phihat}{\hat \phi}
\newcommand{\Lone}{\Lany{1}}
\newcommand{\alexc}{{\mathcal A}_c(\R)}
\newcommand{\alexx}{{\mathcal A}_c^{2}(\R)}
\newcommand{\balexc}{{\mathcal B}_c(\R)}
\newcommand{\hk}{\mathcal{HK}(\R)}
\newcommand{\Cc}{{\mathcal C}_c(\R)}
\newcommand{\Dc}{{\mathcal D}_c(\R)}
\newcommand{\Rbar}{\overline{\R}}
\newcommand{\intinf}{\int^\infty_{-\infty}}
\newcommand{\R}{{\mathbb R}}
\newcommand{\C}{{\mathbb C}}
\newcommand{\Z}{{\mathbb Z}}
\newcommand{\Sc}{{\mathcal S}(\R)}
\newcommand{\Scp}{{\mathcal S}'(\R)}
\newcommand{\fn}{\!:\!}
\newcommand{\bv}{\mathcal{BV}(\R)}
\providecommand{\abs}[1]{\lvert#1\rvert}
\providecommand{\norm}[1]{\lVert#1\rVert}
\providecommand{\Lany}[1]{L^{#1}(\R)}
\begin{document}
\subjclass[2020]{Primary 42A38, 26A39;
Secondary 46B04, 46F10}

\keywords{Fourier transform, Henstock--Kurzweil integral, continuous primitive integral,
distributional Denjoy integral, distributional Henstock--Kurzweil integral,
tempered distribution,
generalised function,
Banach space, convolution, Lebesgue space.
}

\date{Preprint January 28, 2025.\quad To appear in {\it Poincare Journal of Analysis \& Applications}}

\title[fourier transform]{The Fourier transform with Henstock--Kurzweil 
and continuous primitive integrals}
\author{Erik Talvila}
\address{Department of Mathematics \& Statistics\\
University of the Fraser Valley\\
Abbotsford, BC Canada V2S 7M8}
\email{Erik.Talvila@ufv.ca}

\begin{abstract}
For each $f\!:\!\mathbb{R}\to\mathbb{C}$ that is Henstock--Kurzweil integrable on the real line,
or is a distribution in the completion of the space of Henstock--Kurzweil integrable functions in the
Alexiewicz norm,
it is shown that 
the Fourier transform is the second distributional derivative
of a H\"older continuous function.  The 
space of such Fourier transforms is isometrically isomorphic
to the completion of the Henstock--Kurzweil integrable functions.
There is an exchange theorem, inversion
in norm and convolution results.  Sufficient conditions are given
for an $L^1$ function to have a Fourier transform that is of bounded variation.  
Pointwise inversion of the Fourier transform is proved for functions
in $L^p$ spaces for $1<p<\infty$.
The exchange theorem is used to evaluate an integral that does not appear in published tables.
\end{abstract}

\maketitle

\section{Introduction}\label{sectionintroduction}

In this paper differentiation and integration in spaces of tempered 
distributions are used to define the Fourier transform of $f\fn\R\to\C$ when $f$ is
Henstock--Kurzweil integrable.  All of the results also apply when 
$f$ is in the completion of the space of
Henstock--Kurzweil integrable functions in the Alexiewicz norm, which consists
of the distributions that are the distributional derivative of a function that
is continuous on the extended real line.   

If $f\in L^1(\R)$ then its Fourier
transform is $\fhat(s)=\intinf e^{-ist}f(t)\,dt$ for $s\in\R$.  In this case, $\fhat$
is uniformly continuous on $\R$ and
vanishes at infinity (Riemann--Lebesgue lemma).  See, for example, \cite{folland},
\cite{grafakosclassical}
or \cite{steinweiss} for
basic results in Fourier analysis.  The dual space and set of multipliers of 
$L^1(\R)$ is $L^\infty(\R)$ so
the Fourier transform is well defined and $\abs{\fhat(s)}\leq\sup_{t\in\R}\abs{e^{-ist}}
\intinf\abs{f(t)}\,dt=\norm{f}_1$.  But when the integral $\intinf f(t)\,dt$ is
allowed to converge conditionally then the set of multipliers are the functions
of bounded variation and $\intinf f(t)g(t)\,dt$ will exist for each function $g$
of bounded variation.  But, the complex exponential kernel $t\mapsto e^{-ist}$ is
not of bounded variation (unless $s=0$).  A consequence is poor pointwise behaviour 
of the Fourier
transform.  Indeed, in this case the Fourier transform integral can diverge for each $s\not=0$.
This problem is circumvented by defining the Fourier transform as a derivative
rather than as an integral.

The Fourier transform is defined
as the second distributional derivative of the integral
$\intinf (1-ist-e^{-ist})f(t)\,dt/t^2$.  The kernel is chosen here so that it
of bounded variation and its second derivative with respect to $s$ is $e^{-ist}$.
If $f$ were in $L^1(\R)$ then this integral could be twice differentiated in the
pointwise sense inside the integral to yield the usual definition of the Fourier
transform.

As a function of $s$,
the integral given above is  H\"older continuous but need not be
Lipschitz continuous.  This formulation gives more information about the Fourier
transform than the exchange formula definition for tempered distributions, 
$\langle \fhat,\phi\rangle=\langle f,\phihat\rangle$,
where $\phi$ is a test function.  This leads to an exchange formula valid when
the functions $x\phi(x)$ and
$x\phi'(x)$ are merely of bounded variation, with mild limit conditions at infinity.  
Inversion in
norm with a summability kernel follows as well as analogues of the usual convolution
results.  In this case, the Fourier transform need not have any pointwise values.
It is integrable in the sense of distributional integrals, which convert to
Stieltjes integrals.

The paper is based on the following observation.
Notice that if $f\in L^1(\R)$ then, using the Fubini--Tonelli theorem in 
\eqref{Omega1} and \eqref{vs},
\begin{equation}
\Omega_f(s)  \defeq
\int_{0}^s\int_0^{\sigma_2}\fhat(\sigma_1)\,d\sigma_1\,d\sigma_2
   =  \intinf f(t)\int_{0}^s\int_0^{\sigma_2}e^{-i\sigma_1t}\,d\sigma_1\,d\sigma_2\,dt
=\intinf v_s(t)f(t)\,dt,\label{Omega1}
\end{equation}
where
\begin{equation}
v_s(t)\defeq\int_{0}^s\int_0^{\sigma_2}e^{-i\sigma_1t}\,d\sigma_1\,d\sigma_2
=\int_0^se^{-i\sigma_1t}(s-\sigma_1)\,d\sigma_1=
\frac{1-ist-e^{-ist}}{t^2}=s^2v_1(st).\label{vs}
\end{equation}
Twice differentiating the first integral in \eqref{Omega1}, using the fundamental theorem
of calculus, gives $\fhat(s)$ at each $s\in\R$.  
Twice differentiating the final integral in \eqref{Omega1}, using dominated convergence,
also gives $\fhat(s)$ at each $s\in\R$.  Using the second distributional derivative, 
this formula will be taken as the definition
of the Fourier transform in the completion of the space of Henstock--Kurzweil integrable
functions.

Denote the space of Henstock--Kurzweil integrable functions by $\hk$.
If $f\in\hk$ then the Alexiewicz norm
of $f$ is $\norm{f}=\sup_{x\in\R}\abs{\int_{-\infty}^xf(t)\,dt}=\norm{F}_\infty$, where
$F(x)=\int_{-\infty}^xf(t)\,dt$ is the primitive of $f$ that vanishes at $-\infty$. 
If $f\in L^1(\R)$
then $f\in\hk$ and
$\norm{f}\leq\norm{f}_1$, with equality if and only if $f$ is of one sign almost
everywhere.  The set of functions with an improper Riemann integral (or Cauchy--Lebesgue
integral) are also in $\hk$.  Henstock--Kurzweil integrals are described, for 
example, in \cite{gordon},
\cite{mcleod}, \cite{monteiroslaviktvrdy} and \cite{swartz}.

The extended real line is $\Rbar=[-\infty,\infty]$.  A function $F\fn\Rbar\to\C$ is
continuous on $\Rbar$ if it is continuous on $\R$ and the limits $\lim_{x\to-\infty}F(x)$
and $\lim_{x\to\infty}F(x)$ exist.  This provides a two-point compactification of the
real line.
Denote the space of functions continuous on $\Rbar$ by $C(\Rbar)$.
For example, the function $F(x)=\arctan(x)$ is in $C(\Rbar)$ if we define $F(\pm\infty)
=\pm\pi/2$.  And, no definition at $\pm\infty$ can make the exponential function
or cosine continuous on $\Rbar$.
Note that $C(\Rbar)$ is a Banach space under the uniform norm, $\norm{F}_\infty
=\sup_{x\in\R}\abs{F(x)}=\max_{x\in\Rbar}\abs{F(x)}$.

The Schwartz space, $\Sc$, consists of the functions in $C^\infty(\R)$ such that
for all integers
$m,n\geq 0$,
$x^m\phi^{(n)}(x)\to 0$ as $\abs{x}\to\infty$.  The tempered distributions, denoted
$\Scp$, is the space of continuous linear functionals on $\Sc$.  Results on distributions used here can
be found, for example, in \cite{donoghue}, \cite{folland} or \cite{friedlanderjoshi}.

\begin{defn}\label{defnAcBc}
Define a space of primitives by
$$
\balexc=\{F\fn\Rbar\to\C\mid F\in C(\Rbar) \text{ and }F(-\infty)=0\}.
$$
Define a space of integrable distributions by
$$
\alexc=\{f\in\Scp\mid f=F' \text{ for some } F\in\balexc\}.
$$
\end{defn}

If $f\in\alexc$ then its continuous
primitive integral is $\int_a^bf=F(b)-F(a)$ for all $a,b\in\Rbar$.
The primitive of $f$ is $F\in\balexc$ such that $F'=f$, where the
distributional derivative of $F$ is 
$\langle F',\phi\rangle=-\langle F,\phi'\rangle=-\intinf F(t)\phi'(t)\,dt$ and 
$\phi\in\Sc$.  
The primitive is unique since the only constant function
in $\balexc$ is the zero function. 

This then furnishes a descriptive definition of the continuous primitive integral.
For the Lebesgue integral on the real line the corresponding space of primitives
are the absolutely continuous functions that are also of bounded variation.

The definition $\langle f,\phi\rangle=\intinf f\phi=-\intinf F(t)\phi'(t)\,dt$
shows that $f\in\alexc$ is a tempered distribution, for if $\phi_k\to 0$ as $k\to\infty$
then
$$
\left|\langle f,\phi_k\rangle\right|=\left|\intinf F(t)\phi'_k(t)\,dt\right|
\leq\norm{F}_\infty\intinf\frac{(t^2+1)\abs{\phi'_k(t)}}{t^2+1}\,dt\to 0 \text{ as }
k\to\infty.
$$
Notice that $\alexc$ is a proper subset of $\Scp$ since the Dirac distribution is a
tempered distribution and is the distributional derivative of the Heaviside step
function, which is not continuous.

Note that
$\alexc$ properly contains $L^1(\R)$, functions having an improper Riemann integral,
functions with a Cauchy--Lebesgue integral, and  $\hk$.
As well, $\alexc$ contains distributions that have no pointwise values.
For example, if $F\in\balexc$ is a function of Cantor--Lebesgue type that has pointwise
derivative zero almost everywhere, then the fundamental theorem of calculus formula
$\int_a^bF'=F(b)-F(a)$ holds for the continuous primitive integral, 
but for
the Lebesgue integral, $\int_a^b F'(t)\,dt=0$.
And,
if $F\in\balexc$ is of Weierstrass type with a pointwise derivative existing nowhere
then the Lebesgue integral $\int_a^b F'(t)\,dt$ does not exist, while, as a continuous
primitive integral, we still have $\int_a^bF'=F(b)-F(a)$.
See \cite{talviladenjoy} for background on the continuous primitive integral.
(Also called the distributional Denjoy integral or distributional Henstock--Kurzweil
 integral.)

As in the preceding paragraph, Lebesgue integrals will explicitly show the variable of
integration and differential while distributional integrals will not show these.  An
exception is Theorem~\ref{theoremfubini}.

The function $v_s$ in \eqref{vs} is of bounded variation.  Since the functions of
bounded variation are in the dual space and are multipliers for $\alexc$ 
\cite[\S5]{talviladenjoy}, the function
$\Omega_f$ of \eqref{Omega1} is well-defined.  
It will be shown in Theorem~\ref{theoremOmegaf} that
$\Omega_f$ is H\"older continuous.  The second distributional derivative of
$\Omega_f$ will then be used as the definition of the Fourier transform in $\alexc$.
Of course, every tempered distribution has a Fourier transform via the exchange formula
$\langle \fhat,\phi\rangle=\langle f,\phihat\rangle$ for each $\phi\in\Sc$.  It is
proved in Theorem~\ref{theoremequivalentdefns} that the new definition agrees with this.  
As seen below, rather more specific results can be proved in this subspace of distributions
than with arbitrary tempered distributions.

A kernel such as $\Omega_f$ and a type of second derivative appears in Riemann's work
(\cite[XVI \S8]{zygmundII}).  See also \cite{hahn1925}.
Using pointwise derivatives,
Bochner \cite{bochner} considered various kernels for locally $L^1$ functions;
see Chapter~VI and Note~78, page~288, for further references to early literature.

An attempt was made in \cite{talvilafourierijm} to study Fourier transforms 
with the Henstock--Kurzweil integral.
However, an error \cite{talvilaijmerrata} means most of the results there only apply to
functions with compact support.  The present paper uses completely different techniques
and is the author's way of mending these mistakes.

There are many ways to define Fourier transforms on various subspaces of distributions.
See \cite{chinnaraman} and references.  See \cite{arredondoreyes} and
\cite{arredondomendozreyes} for results
related to the Henstock--Kurzweil integral.  In \cite{mahantaray}, Fourier transforms
are considered with an integral that includes the Henstock--Kurzweil integral.

The outline of the paper is as follows.  In Section~\ref{sectionOmega}. it is shown that
$\Omega_f$ \eqref{Omega1} is H\"older continuous but need not be Lipschitz continuous.  
Asymptotic properties are also 
proved.  In Section~\ref{sectionbanach}. it is shown that the space 
of Fourier transforms is a Banach space isometrically isomorphic to $\alexc$ and $\balexc$
(Definition~\ref{defnAcBc}).  In Section~\ref{sectionFTalexc}. it is shown that the definition
$\fhat=\Omega_f''$ agrees with the distributional definition of Fourier transform
$\langle \fhat,\phi\rangle=\langle f,\phihat\rangle$ ($\phi\in\Sc$).  
In Section~\ref{sectionexchange}.
the exchange formula $\intinf \fhat g=\intinf f\ghat$ is proved for $f\in\alexc$ and
$g$ from a subset of bounded variation.  Inversion in the Alexiewicz norm is 
accomplished via convolution
with a family of summability kernels, Section~\ref{sectioninversion}.  Pointwise inversion
of Fourier transforms for $f\in L^p(\R)$ ($1<p<\infty$) is obtained by twice differentiating
$\Omega_{\fhat}$.  Various examples
are given in Section~\ref{sectionexamples}., including a function in $\hk$ for which
$\intinf e^{-ist}f(t)\,dt$ diverges for all $s\not=0$, a distribution whose
Fourier transform does not have point values, and a Henstock--Kurzweil integrable
function whose Fourier transform is not locally integrable in the sense of distributions.
Interaction of the Fourier transform
and convolution are considered in Section~\ref{sectionconvolution}.
The exchange theorem is used in Example~\ref{exampleintegralcomputation} to compute $\int_0^\infty s^{-\nu}\log(1+y^2/(s-x)^2)\,ds$,
which does not appear in tables.

To keep the paper reasonably self contained we list a few properties of the continuous
primitive integral that are used repeatedly.

\begin{theorem}[Integration by parts, H\"older inequality]\label{theorempartsholder}
Let $f\in\alexc$ and let $g$ be of bounded variation.  Then
\begin{eqnarray}
\intinf fg & = & F(\infty)g(\infty)-\intinf F(t)\,dg(t)\label{parts}\\
\left|\intinf fg\right| & \leq & \norm{f}(\abs{g(\infty)}+{\rm var}(g)).\label{holder}
\end{eqnarray}
\end{theorem}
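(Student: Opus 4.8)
The plan is to prove \eqref{parts} first on bounded intervals, pass to the limit over the whole line, and then read off the H\"older inequality \eqref{holder} directly from the parts formula. Before starting I would record the ingredients. Write $F\in\balexc$ for the primitive of $f$, so that $F$ is continuous on $\Rbar$ with $F(-\infty)=0$ and $\norm{f}=\norm{F}_\infty=\sup_{x\in\R}\abs{F(x)}$; in particular $F(\infty)$ exists and $\abs{F(\infty)}\le\norm{f}$. Since $g$ is of bounded variation it is bounded, the one-sided limits $g(\pm\infty)$ exist, and ${\rm var}(g)<\infty$. Because functions of bounded variation are multipliers for $\alexc$ \cite[\S5]{talviladenjoy}, the product $fg$ lies in $\alexc$, so $\intinf fg$ is well defined.

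On a bounded interval $[a,b]$ the Stieltjes integral $\int_a^b F\,dg$ exists, since $F$ is continuous and $g$ is of bounded variation, and the integration-by-parts structure of \cite[\S5]{talviladenjoy}, restricted to $[a,b]$, gives
$$
\int_a^b fg = F(b)g(b)-F(a)g(a)-\int_a^b F\,dg.
$$
Now let $a\to-\infty$ and $b\to\infty$. Continuity of $F$ on $\Rbar$ gives $F(a)\to F(-\infty)=0$ and $F(b)\to F(\infty)$, and as $g$ is bounded with $g(b)\to g(\infty)$ we obtain $F(a)g(a)\to 0$ and $F(b)g(b)\to F(\infty)g(\infty)$. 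The Stieltjes integral converges absolutely, because
$$
\left|\int_a^b F\,dg\right|\le\norm{F}_\infty\,{\rm var}_{[a,b]}(g)\le\norm{f}\,{\rm var}(g),
$$
so $\int_a^b F\,dg\to\intinf F\,dg$. Passing to the limit in the displayed identity yields \eqref{parts}.

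For \eqref{holder} I would apply the triangle inequality to \eqref{parts} together with $\abs{F(\infty)}\le\norm{f}$ and $\bigl|\intinf F\,dg\bigr|\le\norm{F}_\infty\,{\rm var}(g)=\norm{f}\,{\rm var}(g)$, giving $\bigl|\intinf fg\bigr|\le\norm{f}\bigl(\abs{g(\infty)}+{\rm var}(g)\bigr)$. The step I expect to be the main obstacle is the passage to the limit: one must confirm that the whole-line integral $\intinf fg$ is exactly the limit of the compact-interval integrals and that $\intinf F\,dg$ exists as a convergent improper Stieltjes integral. Both hinge on the finiteness of ${\rm var}(g)$ and the existence of $g(\pm\infty)$, combined with the boundedness and continuity of $F$ on $\Rbar$; once these are in place the remaining estimates are routine.
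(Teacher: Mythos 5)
Your estimates and limit passages are all sound, but note that the paper itself offers no proof of this theorem: it simply cites \cite[Definition~6]{talviladenjoy} for \eqref{parts} and \cite[Theorem~7]{talviladenjoy} for \eqref{holder}, and the citation of a \emph{definition} is the essential point. In that framework the product of a distribution $f\in\alexc$ with a function of bounded variation has no prior meaning; the integral $\intinf fg$ is \emph{defined} by the right-hand side of \eqref{parts} (equivalently, $fg$ is defined as the distributional derivative of $x\mapsto F(x)g(x)-\int_{-\infty}^x F\,dg$, which lies in $\balexc$ because the jumps of $Fg$ and of the Stieltjes integral cancel). Consequently the two facts you import at the outset --- the multiplier property of $\bv$ for $\alexc$ and the compact-interval parts formula --- are themselves restatements or consequences of that definition, so your argument is best read as a consistency check relating the compact-interval and whole-line formulations rather than an independent derivation; nothing in it is false, but it cannot be logically prior to \eqref{parts}. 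Within that reading your details are correct: the boundary terms behave as you claim, $\intinf F\,dg$ converges absolutely since $\intinf\abs{F}\,\abs{dg}\le\norm{F}_\infty{\rm var}(g)<\infty$, and $\int_a^b fg\to\intinf fg$ --- though note that this last step is the Hake theorem \cite[Theorem~25]{talviladenjoy} and rests on the continuity on $\Rbar$ of the primitive of $fg$ (i.e., on the multiplier property), not, as your closing paragraph suggests, only on ${\rm var}(g)<\infty$ and the behaviour of $F$. Your deduction of \eqref{holder} from \eqref{parts} via the triangle inequality and $\abs{F(\infty)}\le\norm{f}$ is the standard one and is exactly what the cited Theorem~7 asserts.
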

See \cite[Definition~6]{talviladenjoy} and \cite[Theorem~7]{talviladenjoy}.
A type of Fubini theorem gives a sufficient condition for existence and equality 
of iterated integrals.
\begin{theorem}[Fubini]\label{theoremfubini}
Let $f\in\alexc$.
Let $g\fn\R\times\R\to\R$ be measurable.
Assume (i) for each $y\in\R$ the function
$x\mapsto g(x,y)$  is of bounded variation; (ii) the function $y\mapsto
{\rm var}(g(\cdot,y))$ is in $L^1(\R)$; (iii) there is $M\in L^1(\R)$
such that for each $x\in \R$ we have $|g(x,y)|\leq M(y)$.  Then
$\intinf\intinf f(x)g(x,y)\,dy\,dx=
\intinf\intinf f(x)g(x,y)\,dx\,dy$.
\end{theorem}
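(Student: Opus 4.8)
The plan is to reduce the statement to a classical Fubini theorem for Lebesgue--Stieltjes measures by first applying the integration-by-parts formula \eqref{parts} to the inner integral on each side. Write $F\in\balexc$ for the primitive of $f$, so that $F(-\infty)=0$ and $\norm{F}_\infty<\infty$. Set $G(x)\defeq\intinf g(x,y)\,dy$; by (iii) this converges for every $x$, with $\abs{G(x)}\le\norm{M}_1$, and by (i)--(ii) one has ${\rm var}(G)\le\intinf{\rm var}(g(\cdot,y))\,dy<\infty$, so $G$ is of bounded variation and the left-hand integral $\intinf fG$ is defined. On the other side, for each $y$ the function $g(\cdot,y)$ is a multiplier, so $I(y)\defeq\intinf f(x)g(x,y)\,dx$ exists, and \eqref{holder} gives $\abs{I(y)}\le\norm{f}\bigl(M(y)+{\rm var}(g(\cdot,y))\bigr)\in L^1(\R)$, so $\intinf I(y)\,dy$ is defined. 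Writing $g(\infty,y)\defeq\lim_{x\to\infty}g(x,y)$ (which exists by (i) and satisfies $\abs{g(\infty,y)}\le M(y)$), formula \eqref{parts} gives
\[
\intinf fG=F(\infty)G(\infty)-\intinf F\,dG,\qquad
I(y)=F(\infty)g(\infty,y)-\intinf F(x)\,d_xg(x,y),
\]
and integrating the second identity in $y$ expresses $\intinf I(y)\,dy$ as $F(\infty)\intinf g(\infty,y)\,dy-\intinf\intinf F(x)\,d_xg(x,y)\,dy$.

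Next I would match the boundary terms. Since $\abs{g(x,y)}\le M(y)\in L^1(\R)$, dominated convergence yields $G(\infty)=\lim_{x\to\infty}\intinf g(x,y)\,dy=\intinf g(\infty,y)\,dy$, so $F(\infty)G(\infty)=F(\infty)\intinf g(\infty,y)\,dy$. Thus the theorem reduces to the single identity
\[
\intinf F\,dG=\intinf\Bigl(\intinf F(x)\,d_xg(x,y)\Bigr)dy .
\]

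For this I would argue measure-theoretically. For each $y$ the function $g(\cdot,y)$ induces a finite signed Borel measure $\mu_y$ with total variation $\norm{\mu_y}={\rm var}(g(\cdot,y))$, and $\intinf F(x)\,d_xg(x,y)=\intinf F\,d\mu_y$ since $F$ is continuous. The family $y\mapsto\mu_y$ is measurable and, by (ii), $\intinf\norm{\mu_y}\,dy<\infty$, so $\nu(A)\defeq\intinf\mu_y(A)\,dy$ defines a finite signed Borel measure. For any interval $(a,b]$ one has $\nu((a,b])=\intinf\bigl(g(b,y)-g(a,y)\bigr)\,dy=G(b)-G(a)$, so $\nu$ is the Stieltjes measure of $G$ (the normalization of $\mu_y$ at jumps is immaterial, as $F$ is continuous). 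Hence
\[
\intinf\Bigl(\intinf F\,d\mu_y\Bigr)dy=\intinf F\,d\nu=\intinf F\,dG,
\]
where the first equality is the Fubini theorem for the measurable family $\{\mu_y\}$, valid because $F$ is bounded and $\intinf\norm{\mu_y}\,dy<\infty$. Combining this with the matched boundary terms proves the theorem.

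The main obstacle is this last interchange: establishing the measurability of $y\mapsto\mu_y$ and justifying $\intinf(\intinf F\,d\mu_y)\,dy=\intinf F\,d\nu$ rigorously (for instance by approximating the bounded continuous $F$ uniformly by simple functions on a large interval and controlling the tail through $\intinf\norm{\mu_y}\,dy$). Everything else is a bounded-convergence computation. It is worth recording that, because $F$ is continuous, the Riemann--Stieltjes integrals produced by \eqref{parts} coincide with the Lebesgue--Stieltjes integrals used here and are insensitive to how $g(\cdot,y)$ is normalized at its jumps, which is exactly what makes the identification of $\nu$ with the Stieltjes measure of $G$ unambiguous.
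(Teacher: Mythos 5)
The paper offers no proof to compare against: Theorem~\ref{theoremfubini} appears in the introduction as one of several background ``properties of the continuous primitive integral'' imported from the author's earlier work, and is stated there without proof. Judged on its own, your strategy is correct and is the natural one. Since multiplication of $f\in\alexc$ by a function of bounded variation is handled through \eqref{parts}, both iterated integrals reduce --- after your dominated-convergence matching of the boundary terms, $G(\infty)=\intinf g(\infty,y)\,dy$ --- to the single Stieltjes identity $\intinf F\,dG=\intinf\bigl(\intinf F(x)\,d_xg(x,y)\bigr)\,dy$, and your superposition measure $\nu(A)=\intinf\mu_y(A)\,dy$, combined with the continuity of $F$ to erase jump-normalization ambiguities, settles that identity. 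What you defer is genuinely needed, though, and should be stressed. First, measurability of $y\mapsto\mu_y(A)$, and hence of $y\mapsto I(y)$, is not a refinement but a prerequisite: without it the right-hand iterated integral is not even defined. It is routine, however: joint measurability of $g$ makes $y\mapsto\mu_y((a,b])=\lim_{n\to\infty}\bigl(g(b+\tfrac1n,y)-g(a+\tfrac1n,y)\bigr)$ measurable, and a monotone-class argument (using that each $\mu_y$ is a finite signed measure with $\norm{\mu_y}\le{\rm var}(g(\cdot,y))$, which is in $L^1(\R)$ by (ii)) extends this to all Borel sets and then to $y\mapsto\intinf F\,d\mu_y$. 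Second, the interchange $\intinf\bigl(\intinf F\,d\mu_y\bigr)\,dy=\intinf F\,d\nu$ follows, exactly as you sketch, by uniformly approximating the bounded continuous $F$ by simple functions, with total error controlled by $\norm{F-s}_\infty\intinf{\rm var}(g(\cdot,y))\,dy$. With these two points written out, your argument is a complete, self-contained proof of the theorem.
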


A function is regulated if it has a left limit at each point in $(-\infty,\infty]$
and a right limit at each point in $[-\infty,\infty)$.
Both of these results on Stieltjes integrals also apply on subintervals.
See Proposition~6.2 and Proposition~6.3 in \cite{talvilaSlovaca}.
\begin{prop}\label{prop6.2}
Let $A,B,C\fn[-\infty,\infty]\to\R$ such that two functions are of bounded variation and the
other is regulated.  If $B$ and $C$ have no common point of discontinuity then
$$
\intinf A\,d[BC] =\intinf AB\,dC+\intinf AC\,dB.
$$
\end{prop}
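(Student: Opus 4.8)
The plan is to derive the identity from Stieltjes (gauge) sums via the discrete Leibniz rule, with the no-common-discontinuity hypothesis entering exactly once to annihilate a quadratic remainder. Throughout I read all three integrals in the Kurzweil--Stieltjes (gauge) sense on the compact interval $\Rbar$, and I first record that each exists. In each of the three cases (the regulated function being $A$, $B$, or $C$) at least two of $A,B,C$ are of bounded variation, so the products $AB$, $AC$, $BC$ are each either of bounded variation or regulated, and one checks case by case that in every one of $\intinf A\,d[BC]$, $\intinf AB\,dC$, $\intinf AC\,dB$ either the integrand is regulated and the integrator of bounded variation, or vice versa. Since the Kurzweil--Stieltjes integral of a regulated function against a function of bounded variation exists, as does that of a function of bounded variation against a regulated function, all three integrals are defined; note also that $A,B,C$ are bounded on $\Rbar$.

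Next I would fix a tagged partition $\{x_{i-1}\le\xi_i\le x_i\}$ and use the pointwise identity
\[
B(x_i)C(x_i)-B(x_{i-1})C(x_{i-1})=B(\xi_i)\bigl[C(x_i)-C(x_{i-1})\bigr]+C(\xi_i)\bigl[B(x_i)-B(x_{i-1})\bigr]+R_i,
\]
where $R_i=\bigl(B(x_i)-B(\xi_i)\bigr)\bigl(C(x_i)-C(\xi_i)\bigr)-\bigl(B(x_{i-1})-B(\xi_i)\bigr)\bigl(C(x_{i-1})-C(\xi_i)\bigr)$. Multiplying by $A(\xi_i)$ and summing gives
\[
S\bigl(A,d[BC]\bigr)=S(AB,dC)+S(AC,dB)+\sum_i A(\xi_i)R_i,
\]
where $S(\cdot)$ denotes the corresponding Stieltjes sum. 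For a sufficiently fine common gauge the first three sums approximate their integrals arbitrarily well, so the proposition reduces to showing that the remainder $\sum_i A(\xi_i)R_i$ tends to $0$.

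This remainder estimate is the heart of the matter and the only place the hypotheses interact. Since the asserted formula is symmetric under $B\leftrightarrow C$ and at least one of $B,C$ is of bounded variation, I may assume $C$ is of bounded variation; write $M=\norm{B}_\infty$ and $V={\rm var}(C)$. Bounding the $C$-differences by ${\rm var}(C;I_i)$ on $I_i=[x_{i-1},x_i]$ and the $B$-differences by the oscillation $\omega_B(I_i)=\sup_{u,v\in I_i}\abs{B(u)-B(v)}$ yields $\sum_i\abs{A(\xi_i)R_i}\le 2\norm{A}_\infty\sum_i\omega_B(I_i)\,{\rm var}(C;I_i)$. I then split the tags. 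Where $\xi_i$ is a continuity point of $B$, a small enough gauge forces $\omega_B(I_i)<\varepsilon$, so these terms contribute at most $\varepsilon V$. Where $\xi_i$ lies in the countable discontinuity set $D$ of $B$, one only has $\omega_B(I_i)\le 2M$, but there $C$ is continuous by hypothesis; hence the total variation measure $\abs{dC}$ has no atom at any point of $D$, so $\abs{dC}(D)=0$, and by outer regularity there is an open $U\supset D$ with $\abs{dC}(U)<\varepsilon$. Choosing the gauge so that $I_i\subset U$ whenever $\xi_i\in D$, and using that the $I_i$ are nonoverlapping, the $D$-tags contribute at most $2M\,\abs{dC}(U)<2M\varepsilon$. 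Thus $\sum_i\abs{A(\xi_i)R_i}\le 2\norm{A}_\infty(V+2M)\varepsilon$, which forces the remainder to vanish and gives the identity.

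The main obstacle is precisely this last step. Away from discontinuities $R_i$ is harmless because it is quadratic in the oscillations, but at a jump of $B$ it need not be small pointwise, and there may be infinitely many such jumps, so no crude per-point allocation of $\varepsilon$ suffices. The identity $\abs{dC}(D)=0$ — a direct consequence of the assumption that $B$ and $C$ have no common discontinuity — is exactly what is needed to control the total contribution of the bad tags through the measure $\abs{dC}$ rather than tag by tag. A little additional care is needed to interpret oscillations and the variation measure on the two-point compactification $\Rbar$, and to handle shared endpoints of adjacent subintervals, but both are routine once the interior estimate is in hand.
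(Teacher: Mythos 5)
Your proof is correct, but there is nothing internal to compare it with: the paper gives no argument for this proposition, quoting it (together with Proposition~\ref{prop6.3}) from Proposition~6.2 of \cite{talvilaSlovaca}. Yours is therefore by construction a different, self-contained route, and it is a sound one. The algebraic identity for $R_i$ is exact (indeed $\abs{R_i}\le\omega_B(I_i)\,\mathrm{var}(C;I_i)$, so your factor of $2$ is slack); the reduction to the case ``$C$ of bounded variation, $B$ regulated'' is legitimate because both the formula and the hypotheses are symmetric in $B$ and $C$; and the tag dichotomy --- oscillation control at continuity points of $B$, versus the bound $\abs{dC}(D)=0$ with outer regularity and a gauge forcing $I_i\subset U$ at the countably many discontinuities of $B$ --- is exactly how the no-common-discontinuity hypothesis has to act, globally through the measure $\abs{dC}$ rather than tag by tag. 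Two of your deferred ``routine'' items would need to be made explicit in a full write-up. First, existence of all three integrals rests on the theorem that a Kurzweil--Stieltjes integral exists when the integrand is regulated and the integrator of bounded variation, \emph{and also in the reversed situation}; the reversed case is genuinely needed (e.g.\ for $\intinf A\,d[BC]$ when $B$ is regulated, so that $BC$ is merely regulated while $A$ is of bounded variation) and, while standard, it is nontrivial --- it follows from the integration-by-parts theorem and can be cited from \cite{monteiroslaviktvrdy}. Second, passing from $\sum_{\xi_i\in D}\mathrm{var}(C;I_i)$ to a bound via $U$ uses $\mathrm{var}(C;I_i)\le\abs{dC}(I_i)$ together with the fact that nonoverlapping closed intervals cover each point at most twice, giving $2\abs{dC}(U)$; this is where your shared-endpoint caveat lives, and it only costs a constant. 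What your approach buys is transparency: one sees that the identity is really a statement about the quadratic remainder in the discrete Leibniz rule, that only boundedness of $A$ enters the remainder estimate, and that the same argument run on any $[a,b]\subset\Rbar$ gives the subinterval version the paper asserts immediately after the proposition. What the citation buys the paper is brevity and a uniform source for this result and Proposition~\ref{prop6.3}.
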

\begin{prop}\label{prop6.3}
Let $A$ be regulated and $B$ be absolutely continuous such that $B'\in L^1(\R)$.  Then
$\intinf A\,dB=\intinf A(t)B'(t)\,dt$.
\end{prop}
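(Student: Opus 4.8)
The plan is to prove the identity first on an arbitrary compact interval $[a,b]\subset\R$ and then let $a\to-\infty$ and $b\to\infty$. Since $B$ is absolutely continuous with $B'\in\Lone$, it is of bounded variation with total variation $\norm{B'}_1$ and extends continuously to $\Rbar$, so that $dB$ carries no atom at $\pm\infty$; in particular $\mathrm{var}_{[c,d]}(B)=\int_c^d\abs{B'(t)}\,dt$ for every subinterval $[c,d]$ of $\Rbar$. Because $A$ is regulated it is bounded on the compact space $\Rbar$, and the Stieltjes integral $\intinf A\,dB$ exists with the estimate $\abs{\int_c^d A\,dB}\leq\norm{A}_\infty\,\mathrm{var}_{[c,d]}(B)$ holding on every subinterval. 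I shall also use additivity of the Stieltjes integral over adjacent subintervals.

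On a fixed $[a,b]$, I would argue by density. The two maps
\[
A\longmapsto\int_a^b A\,dB,\qquad A\longmapsto\int_a^b A(t)B'(t)\,dt
\]
are linear functionals on the regulated functions on $[a,b]$, and both are continuous for the uniform norm, the first by the estimate above and the second since $\abs{\int_a^b AB'}\leq\norm{A}_\infty\int_a^b\abs{B'}$. They coincide on step functions: if $s$ takes the constant value $c_i$ on the $i$-th subinterval of its defining partition $a=t_0<\dots<t_n=b$, then continuity of $B$ makes the endpoint values irrelevant and
\[
\int_a^b s\,dB=\sum_i c_i\bigl(B(t_i)-B(t_{i-1})\bigr)=\sum_i c_i\int_{t_{i-1}}^{t_i}B'(t)\,dt=\int_a^b s(t)B'(t)\,dt.
\]
Since every regulated function is a uniform limit of step functions, the two continuous functionals agree on all of them, giving $\int_a^b A\,dB=\int_a^b A(t)B'(t)\,dt$.

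To pass to the limit, additivity yields
\[
\intinf A\,dB-\int_a^b A\,dB=\int_{-\infty}^a A\,dB+\int_b^\infty A\,dB,
\]
and the two tails are bounded by $\norm{A}_\infty\int_{-\infty}^a\abs{B'}$ and $\norm{A}_\infty\int_b^\infty\abs{B'}$, both tending to $0$ as $a\to-\infty$ and $b\to\infty$ because $B'\in\Lone$. On the Lebesgue side $AB'\in\Lone$, since $A$ is bounded and $B'\in\Lone$, so $\int_a^b A(t)B'(t)\,dt\to\intinf A(t)B'(t)\,dt$. Combining these limits with the compact-interval identity proves the proposition.

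The step I expect to require the most care is the behaviour at infinity: I must be sure that no variation of $B$ is hidden ``at $\pm\infty$,'' so that the tail estimates actually control $\intinf A\,dB-\int_a^b A\,dB$. This rests on the continuity of $B$ on $\Rbar$, which rules out a jump at infinity and gives $\mathrm{var}_{[b,\infty]}(B)=\int_b^\infty\abs{B'}\,dt$; once this is in hand, the absolute integrability of $B'$ closes the argument routinely.
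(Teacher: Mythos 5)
Your proof is correct, but there is nothing in the paper to compare it with line by line: the paper does not prove this proposition, it simply quotes it (together with Proposition~\ref{prop6.2}) from Proposition~6.3 of \cite{talvilaSlovaca}. So what you have produced is a genuinely self-contained alternative to a citation, and it is the standard one: on a compact interval, both $A\mapsto\int_a^b A\,dB$ and $A\mapsto\int_a^b A(t)B'(t)\,dt$ are uniformly continuous linear functionals on the regulated functions, they agree on step functions because $B$ is continuous (so node values are irrelevant) and absolutely continuous (so $B(t_i)-B(t_{i-1})=\int_{t_{i-1}}^{t_i}B'$), and step functions are uniformly dense; then the tails are killed because $\mathrm{var}_{[b,\infty]}(B)=\int_b^\infty\abs{B'(t)}\,dt\to 0$, the continuity of the extension of $B$ to $\Rbar$ ruling out variation concentrated at $\pm\infty$ --- a point you rightly single out as the delicate one. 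Two ingredients you invoke without proof deserve flagging: (i) the existence of $\int A\,dB$ for regulated $A$ against a continuous integrator of bounded variation, equivalently a uniform convergence theorem for the Stieltjes integral guaranteeing $\int s_n\,dB\to\int A\,dB$; this is standard (see, e.g., \cite{monteiroslaviktvrdy}) but it is the one nontrivial analytic fact, and strictly speaking the existence of the left-hand side is part of what the proposition asserts; (ii) additivity of the Stieltjes integral over subintervals of $\Rbar$, including the unbounded ones, which the paper's framework (Henstock--Stieltjes integration on the compactified line) supplies. Also note that your boundedness claim for $A$ does hold under the paper's definition of regulated, since one-sided limits are required at every point of $\Rbar$ \emph{including} $\pm\infty$, so a compactness argument gives $\norm{A}_\infty<\infty$. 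In short: the paper buys brevity by outsourcing the result; your argument makes it self-contained and shows it is insensitive to which Stieltjes integral (refinement Riemann--Stieltjes or Kurzweil--Stieltjes) is being used, at the cost of leaning on the standard existence/convergence theorem in (i).
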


It will be convenient to use the following notation.
\begin{defn}\label{defnpn}
For $n\in\Z$, let $p_n(x)=x^n$.
\end{defn}

\section{Properties of $\Omega_f$}\label{sectionOmega}
Various properties of $\Omega_f$ are proved that are essential in definition of the Fourier
transform.

\begin{theorem}[Properties of $\Omega_f$]\label{theoremOmegaf}
Let $v_s(t)=(1-ist-e^{-ist})/t^2$ for $s,t\in\R$, with $v_s(0)=s^2/2$.  For $f\in\alexc$ define
$\Omega_f(s)=\intinf v_sf$.  Then\\
(a) $\abs{\Omega_f(s)}\leq c\norm{f}s^2$ where $c=\intinf\abs{v_1'(t)}\,dt={\rm var}(v_1)$.
It follows that $\Omega_f$ is differentiable at $0$ and
$\Omega_f'(0)=0$.  The constant $c$ is the best possible.\\
(b) As $s\to 0$, $\Omega_f(s)\sim F(\infty) s^2/2$, where $F(x)=\int_{-\infty}^xf$.
As well, $\Omega_f(s)=o(s^2)$ as $\abs{s}\to\infty$ and this growth estimate is sharp.\\
(c) There is a constant $k$ such that for each $s\in\R$ and $0<\abs{h}<1/e$ 
$$
\abs{\Omega_f(s+h)-\Omega_f(s)}\leq k\norm{f}(\abs{s}\abs{h}\abs{\log\abs{h}}
+(s^2+1)\abs{h}).
$$
It follows that $\Omega_f$ is continuous on $\R$ and H\"older 
continuous on each compact interval of $\R$, for each exponent $0<\alpha<1$.\\
(d) $\Omega_f$ need not be Lipschitz continuous.\\
(e) Let $1/\rho+1/\sigma=1$ with $1<\rho<\infty$.  Let $u_s(t)=(1-e^{-ist})/(it)$.
Then 
\begin{align*}
&\abs{\Omega_f(s+h)-\Omega_f(s)}\\
&\leq\norm{f}\left[{\rm var}(v_1)h^2
+\norm{u_1'}_\rho\norm{u_1}_\sigma\abs{h}^{2-1/\rho}\abs{s}^{1/\rho}+
\norm{u_1}_\rho\norm{u_1'}_\sigma\abs{h}^{1-1/\rho}\abs{s}^{1+/\rho}\right].
\end{align*}
\end{theorem}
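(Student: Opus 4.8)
The plan is to reduce the whole estimate to a bound on ${\rm var}(v_{s+h}-v_s)$ and then exploit a product identity for the kernel. First, since the numerator of $v_s$ grows at most linearly while its denominator is $t^2$, each $v_s$ vanishes at $+\infty$; hence $g=v_{s+h}-v_s$ is of bounded variation with $g(\infty)=0$. Applying the H\"older inequality \eqref{holder} of Theorem~\ref{theorempartsholder} gives
\[
\abs{\Omega_f(s+h)-\Omega_f(s)}=\abs{\intinf f(v_{s+h}-v_s)}\leq\norm{f}\,{\rm var}(v_{s+h}-v_s),
\]
so it remains to bound ${\rm var}(v_{s+h}-v_s)$ by the bracketed expression.

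The key step, which I expect to be the main obstacle since it has to be guessed rather than read off, is the algebraic identity
\[
v_{s+h}(t)-v_s(t)=v_h(t)+u_s(t)u_h(t).
\]
To verify it I would subtract $v_h$ from $v_{s+h}-v_s$, place everything over the common denominator $t^2$, and observe that the numerator collapses to $-(1-\est)(1-e^{-iht})$; since $(1-\est)/t=iu_s(t)$ and likewise in $h$, this numerator divided by $t^2$ is exactly $u_s(t)u_h(t)$. A check at $t=0$ against $v_s(0)=s^2/2$ and $u_s(0)=s$ confirms the identity. Subadditivity of the variation then yields ${\rm var}(v_{s+h}-v_s)\leq{\rm var}(v_h)+{\rm var}(u_su_h)$.

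For the first term the dilation $v_h(t)=h^2v_1(ht)$ of \eqref{vs} gives ${\rm var}(v_h)=h^2\,{\rm var}(v_1)$, the leading bracketed contribution. For the second, $u_su_h$ is smooth and is $O(t^{-2})$ at $\pm\infty$, so it is absolutely continuous with $(u_su_h)'\in\Lone$, whence ${\rm var}(u_su_h)=\intinf\abs{u_s'u_h+u_su_h'}\leq\intinf\abs{u_s'u_h}+\intinf\abs{u_su_h'}$. To each piece I would apply H\"older's inequality with the conjugate exponents $\rho,\sigma$, assigning $\sigma$ to the factor carrying $s$ and $\rho$ to the factor carrying $h$, and then invoke the scaling identities $\norm{u_a}_p=\abs{a}^{1-1/p}\norm{u_1}_p$ and $\norm{u_a'}_p=\abs{a}^{2-1/p}\norm{u_1'}_p$ (valid because $u_1,u_1'$ are bounded near $0$ and $O(t^{-1})$ at infinity, hence lie in every $L^p(\R)$ with $1<p<\infty$, so all four norms are finite). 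Simplifying the exponents via $1/\rho+1/\sigma=1$ turns $\intinf\abs{u_su_h'}$ into $\norm{u_1'}_\rho\norm{u_1}_\sigma\abs{h}^{2-1/\rho}\abs{s}^{1/\rho}$ and $\intinf\abs{u_s'u_h}$ into $\norm{u_1}_\rho\norm{u_1'}_\sigma\abs{h}^{1-1/\rho}\abs{s}^{1+1/\rho}$. Combining these two terms with $h^2\,{\rm var}(v_1)$ and multiplying by the factor $\norm{f}$ from the first paragraph produces the stated inequality.
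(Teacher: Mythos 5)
Your argument for part (e) is correct, and it is in fact exactly the paper's route: the paper's proof of (e) consists of the observations $u_s(t)=su_1(st)$ and $v_{s+h}-v_s=v_h+u_su_h$, followed by an appeal to H\"older's inequality, and what you wrote is a faithful expansion of precisely those steps — the verification of the product identity, the dilation laws $\norm{u_a}_p=\abs{a}^{1-1/p}\norm{u_1}_p$ and $\norm{u_a'}_p=\abs{a}^{2-1/p}\norm{u_1'}_p$, and the exponent pairing using $1-1/\sigma=1/\rho$. One presentational slip: the vanishing of $v_{s+h}-v_s$ at $\pm\infty$ does not by itself give bounded variation (your ``hence'' in the opening paragraph), but this is harmless, since your decomposition ${\rm var}(v_{s+h}-v_s)\leq h^2\,{\rm var}(v_1)+\norm{(u_su_h)'}_1$ is what actually establishes it, and it also supplies the $g(\infty)=0$ needed in \eqref{holder}.

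The genuine gap is one of scope: the statement has five parts, and your proposal addresses only (e). Parts (a)--(d) carry most of the theorem's content and none of them follows from your computation. Part (a) needs ${\rm var}(v_s)=cs^2$ together with an extremizing construction (primitives aligned with the phase of $v_s'$) to show the constant $c$ is best possible; part (b) needs integration by parts \eqref{parts} and dominated convergence for the two asymptotic claims, and the uniform boundedness principle (with barrelledness of $\hk$) for sharpness of $o(s^2)$; part (c) is proved by splitting $\intinf\abs{v_{s+h}'(t)-v_s'(t)}\,dt$ over $\abs{t}\leq 1$ and $\abs{t}>1$ and estimating the three terms of \eqref{vs2}, which is what produces the $\abs{s}\abs{h}\abs{\log\abs{h}}$ term — note that your (e) estimate with any fixed $\rho$ only gives $O(\abs{h}^{1-1/\rho})$ in $h$, which is weaker than $\abs{h}\abs{\log\abs{h}}$, so (c) is not a corollary of (e) as stated; and part (d) requires yet another uniform-boundedness argument built on a family of test primitives tuned to the kernel's phase. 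So as a proof of Theorem~\ref{theoremOmegaf} the proposal is incomplete; as a proof of part (e) alone it is correct and coincides with the paper's own argument.
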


\begin{proof}
(a) A Taylor expansion shows $v_s$ is analytic on $\R$ for each fixed $s\in\R$.
By the H\"older inequality \eqref{holder} it follows that
$\abs{\Omega_f(s)}\leq\norm{f}{\rm var}(v_s)=c\norm{f}s^2$.  (With the form
of the Alexiewicz norm taken here the factor of `2' is dropped.)

To show that the constant $c$ is the best possible, write $v_s'(t)=\abs{v_s'(t)}
e^{i\theta_s(t)}$ where $-\pi<\theta_s\leq\pi$. Let $a>0$ and define
$$
F_{s,a}(t)=\left\{\begin{array}{cl}
-e^{-i\theta_s(t)}, & \abs{t}\leq a\\
-e^{-i\theta_s(t)}(1+a-\abs{t}), & a\leq\abs{t}\leq a+1\\
0, & \abs{t}\geq a+1.
\end{array}
\right.
$$
Then $F_{s,a}\in\balexc$.
Let $f_{s,a}=F_{s,a}'$.  By dominated convergence, 
$$
\Omega_{f_{s,a}}(s)=\intinf\abs{v_s'(t)}\,dt-\int_{\abs{t}>a}\abs{v_s'(t)}\,dt
+\int_{a<\abs{t}<a+1}\abs{v_s'(t)}(1+a-\abs{t})\,dt\to cs^2 \text{ as } a\to\infty.
$$

(b) For $s\not=0$, integrate by parts \eqref{parts} and use
dominated convergence,
\begin{eqnarray*}
\Omega_f(s) & = & -s^2{\rm sgn}(s)\intinf v_1'(t)F(t/s)\,dt\\
 & = &  -s^2{\rm sgn}(s)\left(\int_{-\infty}^0v_1'(t)F(t/s)\,dt
+\int_0^\infty v_1'(t)F(t/s)\,dt\right)
  \,\,\substack{\sim\\s\to 0}\,\,\,\,  s^2F(\infty)/2.
\end{eqnarray*}
As well,
$$
\lim_{\abs{s}\to\infty}\frac{\abs{\Omega_f(s)}}{s^2}=\left|\intinf v_1'(t)
\lim_{\abs{s}\to\infty}F(t/s)\,dt\right|=0.
$$

To show this is sharp, let $\psi\fn[0,\infty)\to(0,\infty)$ satisfy $\psi(s)=o(s^2)$ 
as $s\to\infty$.  Define the family of linear operators $L_s\fn\alexc\to\C$ by
$L_s[f]=\Omega_f(s)/\psi(s)$.  The estimate $\abs{L_s[f]}\leq c\norm{f}s^2/\psi(s)$ shows
that for each $s>0$, $L_s$ is a bounded linear operator.  For $s>0$ let $F_s(t)=\overline{v_1'(st)}$
and $f_s=F_s'$.  Then $f_s\in\alexc$ and
$$
\frac{\abs{L_s[f_s]}}{\norm{f_s}}\geq\frac{s^2\norm{v_1'}_2^2}{2\psi(s)\norm{v_1'}_\infty}\to\infty \text{ as } s\to\infty.
$$
Hence, the family of linear operators $L_s$ for $s>0$ is not uniformly bounded.
By the uniform boundedness principle it is not pointwise bounded.  There then exists
$f\in\alexc$ such that $\Omega_f(s)/\psi(s)\nrightarrow 0$ as $s\to\infty$. The order relation
$\Omega_f(s)=o(s^2)$ ($\abs{s}\to\infty$) is then sharp.  Since the space $\hk$
is barrelled \cite{swartz}, $f$ can be chosen here.

(c) From the definition of $\Omega_f$ and the H\"older inequality \eqref{holder},
\begin{equation}
\abs{\Omega_f(s+h)-\Omega_f(s)}
\leq\norm{f}\intinf\abs{v_{s+h}'(t)-v_s'(t)}\,dt.\label{vsholder}
\end{equation}
For $s,h>0$,
use \eqref{vs} to compute
$$
\abs{v_{s+h}'(t)-v_s'(t)}\leq \int_{s}^{s+h}\int_0^{\sigma_2}\sigma_1\,d\sigma_1\,d\sigma_2
=s^2h/2+sh^2/2+h^3/6.
$$
Then, for $h,s\in\R$,
\begin{equation}
\int_{-1}^1\abs{v_{s+h}'(t)-v_s'(t)}\,dt\leq s^2\abs{h}+\abs{s}h^2+\abs{h}^3/3.\label{vs0}
\end{equation}

From \eqref{vs}, with $t\not=0$,
\begin{equation}
v_{s+h}'(t)-v_s'(t) =\frac{ih(1+e^{-i(s+h)t})}{t^2}-\frac{ise^{-ist}(1-e^{-iht})}{t^2}
-\frac{2e^{-ist}(1-e^{-iht})}{t^3}.\label{vs2}
\end{equation}
The first part of the right side of this expression gives
\begin{equation}
\int_{\abs{t}>1}\left|\frac{ih(1+e^{-i(s+h)t})}{t^2}\right|\,dt\leq 2\abs{h}\int_{\abs{t}>1}
\frac{dt}{t^2}=4\abs{h}.\label{vs3}
\end{equation}
For the second term on the right of \eqref{vs2}, write
\begin{align}
&\int_{\abs{t}>1}\left|\frac{ise^{-ist}(1-e^{-iht})}{t^2}\right|\,dt
  =  4\abs{s}\int_{1}^\infty\frac{\abs{\sin(ht/2)}}{t^2}\,dt
 =2\abs{s}\abs{h}\int_{\abs{h}/2}^\infty\frac{\abs{\sin(t)}}{t^2}\,dt\notag\\
 &\leq  2\abs{s}\abs{h}\left(\int_{\abs{h}/2}^1\frac{dt}{t}+\int_{1}^\infty
\frac{dt}{t^2}\right)
 =  2\abs{s}\abs{h}\left(\abs{\log\abs{h}}+1+\log(2)\right).\label{vs4}
\end{align}
For the final term in \eqref{vs2},
\begin{align}
&\int_{\abs{t}>1}\left|\frac{2e^{-ist}(1-e^{-iht})}{t^3}\right|\,dt
=8\int_1^\infty\frac{\abs{\sin(ht/2)}}{t^3}\,dt
=2h^2\int_{\abs{h}/2}^\infty\frac{\abs{\sin(t)}}{t^3}\,dt\notag\\
&\leq 2h^2\left(\int_{\abs{h}/2}^1\frac{dt}{t^2}+\int_1^\infty\frac{dt}{t^3}\right)
\leq 4\abs{h}.\label{vs5}
\end{align}
Combining \eqref{vsholder}, \eqref{vs0}, \eqref{vs3}, \eqref{vs4} and \eqref{vs5} 
completes the proof.

(d)  Let $\psi\fn(0,1)\to(0,\infty)$ such that
$\psi(h)=o(h\log(h))$ as $h\to 0^+$ and
$\psi(h)\geq h\abs{\log(h)}^{1/2}$. Define the family of linear operators 
$M_{s,h}\fn\alexc\to\C$ by
$M_{s,h}[f]=(\Omega_{f}(s+h)-\Omega_f(s))/\psi(h)$.  The estimate in (c) shows that,
for each $s>0$ and $0<h<1/e$, $M_{s,h}$ is a bounded linear operator.  Define
$F_{s,h}(t)=\omega_h(t)e^{-i\theta_{s,h}(t)}$ where
$$
\omega_{h}(t)=\left\{
\begin{array}{cl}
0, & t\leq 1\\
t-1, & 1\leq t\leq 2\\
1, & 2\leq t\leq 2/h\\
1+2/h-t, & 2/h\leq t\leq 1+2/h\\
0, & t\geq 1+2/h
\end{array}
\right.
$$ 
and (c.f. \eqref{vs4}) $\theta_{s,h}(t)={\rm arg}(ie^{-ist}(1-e^{-iht}))$.  Let $f_{s,h}=F_{s,h}'$.  Then
$f_{s,h}\in\alexc$ and $\norm{f_{s,h}}=1$.  Using \eqref{vs3}, \eqref{vs4}, 
\eqref{vs5}, this gives
\begin{eqnarray*}
\frac{\abs{M_{s,h}[f_{s,h}]}}{\norm{f_{s,h}}} & = & 
\frac{1}{\psi(h)}\left|\intinf(v_{s+h}'(t)-v_s'(t))F_{s,h}(t)\,dt\right|\\
 & = & \frac{1}{\psi(h)}\left|\int_{1}^{1+2/h}
(v_{s+h}'(t)-v_s'(t))\omega_h(t)e^{-i\theta_{s,h}(t)}\,dt\right|\\
 & \geq & \frac{2s}{\psi(h)}\int_2^{2/h}\frac{\abs{\sin(ht/2)}}{t^2}\,dt
-\frac{4h}{\psi(h)}.
\end{eqnarray*}
To get a lower bound on the logarithm term, notice that
$\abs{\sin(t)}\geq 2t/\pi$ for $\abs{t}\leq\pi/2$.
As with \eqref{vs4} we get
$$
\frac{\abs{M_{s,h}[f_{s,h}]}}{\norm{f_{s,h}}}  \geq
\frac{2sh\abs{\log(h)}}{\pi\psi(h)}-\frac{4h}{\psi(h)}
\to  \infty \text{ as } h\to 0^+.
$$
Therefore, the logarithm term gives a sharp estimate as $h\to 0$ and $\Omega_f$ need not
be Lipschitz continuous.

(e) Notice that $u_s(t)=su_1(st)$ and $u_s\in L^\rho(\R)$ for each $1<\rho\leq\infty$.
Since $v_{s+h}(t)-v_s(t)=v_h(t)+u_s(t)u_h(t)$ the result follows by H\"older's inequality.

\end{proof}

\begin{remark}\label{remarkab}
It is only necessary to define $\Omega_f$ to the addition of a linear function.
If $a,b\in\R$ then
$$
w_s(t)\defeq\int_{a}^s\int_b^{\sigma_2}e^{-i\sigma_1t}\,d\sigma_1\,d\sigma_2
  =  e^{-ibt}\left[v_{s-b}(t)-v_{a-b}(t)\right]
  =  \frac{-i(s-a)te^{-ibt}-e^{-ist}+e^{-iat}}{t^2}.
$$
And, $\intinf w_sf$ differs from $\Omega_f(s)$ by a linear function.
However, $w_s$ is of finite variation if and only if $b=0$.
In the definition of $v_s$ there is nothing to be gained by using lower limits
of integration other than $0$.
\end{remark}

\begin{remark}
A calculation shows that
$$
v_{s+h}(t)-2v_s(t)+v_{s-h}(t)=h^2e^{-ist}g(ht/2),
$$
where $g(t)=\sin^2(t)/t^2$.  Then the second central difference is
\begin{equation}
\frac{\Omega_f(s+h)-2\Omega_f(s)+\Omega_f(s-h)}{h^2}=\intinf e^{-ist}\left[\frac{\sin(ht/2)}{ht/2}
\right]^2f(t)\,dt.\label{fejer1}
\end{equation}
If $f\in L^1(\R)$, taking the limit $h\to0$ with dominated convergence 
gives $\fhat(s)$ in terms of the Fej\'er kernel.  See Example~\ref{examplekernels}.

Note that the variation of $t\mapsto e^{-ist}\left[\frac{\sin(ht/2)}{ht/2}
\right]^2$ is bounded above by $2\abs{s}\norm{g}_1/\abs{h}+{\rm var}(g)$.  The
limit in \eqref{fejer1} need not exist but by \cite[Theorem~22]{talviladenjoy}
\begin{equation}
\lim_{h\to0}\frac{\Omega_f(s+h)-2\Omega_f(s)+\Omega_f(s-h)}{h}=
\intinf\lim_{h\to0}he^{-ist}\left[\frac{\sin(ht/2)}{ht/2}
\right]^2f(t)\,dt=0.\label{fejer2}
\end{equation}
\end{remark}

\section{Banach spaces}\label{sectionbanach}
Banach spaces of integrable distributions, their primitives and Fourier transforms
are defined.

\begin{theorem}[Banach spaces]\label{theoremBanachspaces}
(a)  With $\Omega_f$ given in Theorem~\ref{theoremOmegaf},
define $\Cc=\{\Omega_f\mid f\in\alexc\}$.  If $f\in\alexc$ define the norm
$\norm{\Omega_f}^{(1)}=\norm{f}$.   Then $\Cc$ is a Banach space isometrically
isomorphic to $\alexc$.

(b) Define $\Dc=\{\Omega_f''\mid f\in\alexc\}$.  If $f\in\alexc$ define
$\norm{\Omega_f''}^{(2)}=\norm{f}$.   Then $\Dc$ is a Banach space isometrically
isomorphic to $\alexc$.

(c) The four Banach spaces $\alexc$, $\balexc$, $\Cc$ and $\Dc$ are isometrically isomorphic.
\end{theorem}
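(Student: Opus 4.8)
The plan is to exhibit all four spaces as isometric copies of one another along the chain
\[
\balexc \xrightarrow{\;D\;}\alexc\xrightarrow{\;T\;}\Cc\xrightarrow{\;D^2\;}\Dc,
\]
where $D$ is the distributional derivative $F\mapsto F'$, $T$ is the map $f\mapsto\Omega_f$, and $D^2$ is the second distributional derivative $\Omega_f\mapsto\Omega_f''$. Each arrow is linear and, by the definitions in the statement, surjective onto its target; moreover the prescribed norm on the target is $\norm{f}$, which is well defined precisely when the arrow is injective. Thus it suffices to prove injectivity of each arrow, after which it is automatically an isometric isomorphism. Completeness then transfers along the chain from the one space already known to be Banach, and parts (a), (b), (c) all drop out at once because the three norms $\norm{\cdot}^{(1)}$, $\norm{\cdot}^{(2)}$ and the Alexiewicz norm are, by fiat, the same number $\norm{f}$.

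First I would dispatch the two easy links. The space $\balexc$ is the kernel of the continuous functional $F\mapsto F(-\infty)$ on $(C(\Rbar),\norm{\cdot}_\infty)$, hence a closed subspace of a Banach space and therefore complete. The derivative $D$ is a bijection $\balexc\to\alexc$ (every $f\in\alexc$ has a primitive in $\balexc$, unique because the only constant in $\balexc$ is $0$) and is an isometry by the very definition $\norm{f}=\norm{F}_\infty$ of the Alexiewicz norm; thus $\alexc$ is Banach and $\alexc\cong\balexc$. At the other end, $D^2$ is injective for a soft reason: if $\Omega_f''=0$ as a distribution then the continuous function $\Omega_f$ is affine, and since $v_0\equiv 0$ forces $\Omega_f(0)=0$ while Theorem~\ref{theoremOmegaf}(a) gives $\Omega_f'(0)=0$, we conclude $\Omega_f\equiv 0$.

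The crux, and the step I expect to be the main obstacle, is the injectivity of $T$: that $\Omega_f=0$ implies $f=0$. I would prove this by pairing $\Omega_f$ with $\phi''$ for an arbitrary $\phi\in\Sc$ and exchanging the order of integration. Since $v_s$ is of bounded variation with ${\rm var}(v_s)=cs^2$ and $\abs{v_s(t)}\le s^2/2$, the functions $(t,s)\mapsto\phi''(s)v_s(t)$ (treated in real and imaginary parts) satisfy the hypotheses of Theorem~\ref{theoremfubini}, so
\[
\int_{-\infty}^\infty\phi''(s)\,\Omega_f(s)\,ds=\intinf\Bigl(\intinf\phi''(s)v_s(t)\,ds\Bigr)f.
\]
The inner $s$-integral is then evaluated by integrating by parts twice, using $\partial_s v_s=u_s$ and $\partial_s u_s=e^{-ist}$ read off from \eqref{vs}, the boundary terms vanishing by the rapid decay of $\phi$ against the polynomial growth of $v_s$ and the boundedness of $u_s$; this gives exactly $\phihat(t)$. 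Hence $\langle\Omega_f,\phi''\rangle=\intinf f\phihat=\langle f,\phihat\rangle$, so $\Omega_f=0$ yields $\langle f,\phihat\rangle=0$ for every $\phi\in\Sc$. Because $\phi\mapsto\phihat$ is a bijection of $\Sc$, the distribution $f$ annihilates all of $\Sc$ and therefore $f=0$. This computation also establishes the identity $\Omega_f''=\fhat$ anticipated in Section~\ref{sectionFTalexc}.

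With all three arrows injective, each is an isometric isomorphism onto its target, $\Cc$ and $\Dc$ inherit completeness from $\alexc$, and composing the arrows delivers the mutual isometric isomorphism of $\balexc$, $\alexc$, $\Cc$ and $\Dc$ asserted in (a)--(c).
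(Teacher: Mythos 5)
Your proof is correct and follows essentially the same route as the paper: the crux, injectivity of $f\mapsto\Omega_f$, is established exactly as in the paper's proof by pairing $\Omega_f$ with $\phi''$, applying Theorem~\ref{theoremfubini} using ${\rm var}(v_s)=cs^2$ and $\abs{v_s(t)}\le s^2/2$, integrating by parts twice in $s$ to get $\langle f,\phihat\rangle=0$, and invoking the fact that the Fourier transform maps $\Sc$ onto $\Sc$. Your remaining steps are only more explicit versions of what the paper does or cites: you rule out affine functions in $\Cc$ via $\Omega_f(0)=\Omega_f'(0)=0$ from Theorem~\ref{theoremOmegaf}(a) (the paper uses the asymptotic in Theorem~\ref{theoremOmegaf}(b) for the same purpose), and you supply the short argument, which the paper attributes to \cite{talviladenjoy}, that $\balexc$ is complete and $F\mapsto F'$ is an isometric bijection onto $\alexc$.
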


\begin{proof}
(a) Define $\Lambda\fn\alexc\to\Cc$ by $\Lambda(f)=\Omega_f$. By definition, 
$\Lambda$ is linear and onto $\Cc$.  Prove it is one-to-one.  Suppose $f\in\alexc$ and
$\Lambda(f)=0$, i.e., $\Omega_f(s)=0$ for all $s\in\R$.  For each $\phi\in\Sc$ then,
$$
0=\intinf\Omega_f(s)\phi''(s)\,ds=\intinf\intinf f(t)v_s(t)\phi''(s)\,dt\,ds.
$$
To use the Fubini theorem for $\alexc$, Theorem~\ref{theoremfubini},
note that from Theorem~\ref{theoremOmegaf}(a), ${\rm var}(v_s\phi''(s))=cs^2\abs{\phi''(s)}$ and,
as a function of $s$, this is in $L^1(\R)$.  As well then,
$\abs{v_s(t)\phi''(s)}\leq cs^2\abs{\phi''(s)}$.
Condition (iii) of Theorem~\ref{theoremfubini} is thus satisfied.  And,
$$
0=\intinf f(t)\intinf v_s(t)\phi''(s)\,ds\,dt.
$$
Integrating by parts twice, and using the fact that $\partial^2 v_s(t)/\partial s^2=e^{-ist}$,
shows that $\intinf f\phihat=0$.

The Fourier transform is an isomorphism on $\Sc$.  The transform and its inverse
are sequentially continuous so then $\langle f,\phi\rangle=0$ for all $\phi\in\Sc$ and
$f=0$ as a tempered distribution.

Hence, $\Lambda$ is a linear isometry and isomorphism, and
$\alexc$ and $\Cc$ are isometrically isomorphic
Banach spaces.

(b) The second distributional derivative provides a linear isometry and isomorphism
from $\Cc$ to $\Dc$. The estimate $\Omega_f(s)\sim F(\infty)s^2/2$ as $s\to0$ in 
Theorem~\ref{theoremOmegaf}(b)
shows that the only linear function in $\Cc$ is the zero function.

(c)
It was shown in \cite{talviladenjoy} that $\alexc$ and $\balexc$ are isometrically
isomorphic.
\end{proof}

\section{The Fourier transform in $\alexc$}\label{sectionFTalexc}

Now we can define the Fourier transform of distributions in $\alexc$.
\begin{defn}\label{defnfhatalexc}
Let $f\in\alexc$. 
Define the Fourier transform as $\hat{\mbox{}}\,\fn\alexc\to\Dc$ by $\fhat=\Omega_f''$,
i.e., 
$\langle \fhat,\phi\rangle=\langle \Omega_f,\phi''\rangle=\intinf\Omega_f(s)\phi''(s)\,ds$ 
for each $\phi\in\Sc$.
\end{defn}
The space of Fourier transforms on $\alexc$ is then denoted $\Dc$, as per 
Theorem~\ref{theoremBanachspaces}.

The Fourier transform
of a tempered distribution, $T\in\Scp$, is $\langle \hatT,\phi\rangle=\langle T, \phihat\rangle$
for each Schwartz function $\phi\in\Sc$.
Since $\Omega_f$ is continuous and of slow (polynomial) growth then $\fhat$ as defined
by Definition~\ref{defnfhatalexc} is a tempered distribution.

\begin{theorem}\label{theoremequivalentdefns}
Definition \ref{defnfhatalexc} agrees with the definition of the Fourier transform of a tempered
distribution.
\end{theorem}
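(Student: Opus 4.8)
The plan is to unwind the defining pairing $\langle\fhat,\phi\rangle=\intinf\Omega_f(s)\phi''(s)\,ds$ and show it equals $\langle f,\phihat\rangle$ for every $\phi\in\Sc$; this is essentially the computation already carried out against the zero functional in the proof of Theorem~\ref{theoremBanachspaces}(a), and the same steps apply here without setting the result to zero. First I would substitute the definition $\Omega_f(s)=\intinf v_s f$ from \eqref{Omega1}, writing
\[
\langle\fhat,\phi\rangle=\intinf\Omega_f(s)\phi''(s)\,ds=\intinf\intinf f(t)v_s(t)\phi''(s)\,dt\,ds,
\]
where the inner integral over $t$ is a continuous primitive integral and the outer integral over $s$ is a Lebesgue integral against the rapidly decreasing function $\phi''$.

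The first substantive step is to interchange the two integrations using the Fubini theorem for $\alexc$, Theorem~\ref{theoremfubini}, with $g(t,s)=v_s(t)\phi''(s)$. Its hypotheses are met exactly as in Theorem~\ref{theoremBanachspaces}(a): for fixed $s$ the map $t\mapsto v_s(t)$ is of bounded variation with ${\rm var}(v_s)=cs^2$ by Theorem~\ref{theoremOmegaf}(a), so $s\mapsto{\rm var}_t(g(\cdot,s))=cs^2\abs{\phi''(s)}$ lies in $L^1(\R)$, verifying condition (ii); and the pointwise bound $\abs{v_s(t)}\le cs^2$ gives $\abs{g(t,s)}\le cs^2\abs{\phi''(s)}=:M(s)\in L^1(\R)$, verifying condition (iii). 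Fubini then yields
\[
\langle\fhat,\phi\rangle=\intinf f(t)\left(\intinf v_s(t)\phi''(s)\,ds\right)dt.
\]

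The second step evaluates the inner $s$-integral, which for fixed $t$ is an ordinary Lebesgue integral. I would integrate by parts twice in $s$, moving both derivatives from $\phi$ onto $v_s(t)$; the boundary terms vanish because $\phi$ and $\phi'$ decay faster than any polynomial while $v_s(t)$ and $\partial_s v_s(t)$ grow only polynomially in $s$. Using $\partial^2 v_s(t)/\partial s^2=\est$ (as recorded in the text following \eqref{vs}), this gives $\intinf v_s(t)\phi''(s)\,ds=\intinf\est\phi(s)\,ds=\phihat(t)$. Since the Fourier transform maps $\Sc$ into $\Sc$, we have $\phihat\in\Sc$, so the continuous primitive integral $\intinf f\phihat$ coincides with the tempered-distribution pairing $\langle f,\phihat\rangle$, and I conclude $\langle\fhat,\phi\rangle=\langle f,\phihat\rangle$ for all $\phi\in\Sc$, which is the asserted agreement.

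I expect the main obstacle to be the rigorous justification of the interchange of integrals, since one of them is the distributional continuous primitive integral rather than a Lebesgue integral; this is exactly what Theorem~\ref{theoremfubini} is designed to handle, so the crux is the verification of its three hypotheses, particularly the uniform $L^1$ domination and the bounded-variation-in-$t$ condition. The double integration by parts is routine once the polynomial growth of $v_s$ and $\partial_s v_s$ is balanced against the Schwartz decay of $\phi$, so the weight of the proof rests on the Fubini step.
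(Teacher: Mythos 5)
Your proposal is correct and follows essentially the same route as the paper, whose proof of this theorem simply points back to the computation in Theorem~\ref{theoremBanachspaces}(a): the Fubini interchange via Theorem~\ref{theoremfubini} (with the same verification ${\rm var}(v_s\phi''(s))=cs^2\abs{\phi''(s)}$ and the bound $\abs{v_s(t)\phi''(s)}\leq cs^2\abs{\phi''(s)}$), followed by two integrations by parts in $s$ using $\partial^2 v_s(t)/\partial s^2=e^{-ist}$. Your observation that the argument there is just the present identity evaluated against the zero functional, and that removing that specialization yields $\langle\fhat,\phi\rangle=\langle f,\phihat\rangle$ directly, is exactly what the paper intends.
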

\begin{proof}
This is contained in the proof of Theorem~\ref{theoremBanachspaces}(a).
\end{proof}
Hence, Fourier transforms in 
$\alexc$ inherit the properties of Fourier transforms in $\Scp$.
See, for example, \cite[\S30]{donoghue} or \cite[2.3.22]{grafakosclassical}.

\begin{remark}\label{remarkLp}
The existence of $\Omega_f$ depends on the fact that $v_s$ is of bounded variation
and hence is in the dual space of $\alexc$.
If the Fourier transform of a function $f\in L^1(\R)$ is integrated once, then in
place of \eqref{Omega1} and \eqref{vs}, one obtains
$$
\Psif(s)\defeq\int_0^s\fhat(\sigma)\,d\sigma=\intinf u_s(t)f(t)\,dt,
$$
where $u_s(t)=\partial v_s(t)/\partial s=(1-e^{-ist})/(it)$.  The Fourier transform
is then given by the pointwise derivative $\fhat(s)=\Psif'(s)$ for each $s\in\R$.
Since $u_s\in L^q(\R)$ for each $1< q\leq\infty$ then $\Psif$ also exists
when $f\in L^p(\R)$ for some $1\leq p<\infty$. And the Fourier transform can be
defined as the distributional derivative $\fhat=\Psif'$.  This method was used to define
Fourier transforms in \cite{talvilaLpFourier}, where corresponding results were
obtained as in the current paper.

Let $\phi\in\Sc$.
If $f\in L^p(\R)$ for some $1\leq p<\infty$ then, using the estimates in the proof
of Theorem~\ref{theoremBanachspaces},
the Fubini--Tonelli theorem
and integration by parts show
\begin{eqnarray}
\langle\Psif',\phi\rangle & = & -\intinf\intinf u_s(t)f(t)\,dt\phi'(s)\,ds
 = -\intinf f(t)\intinf \frac{\partial v_s(t)}{\partial s}\phi'(s)\,ds\,dt\notag\\
 & = &   \intinf f(t)\intinf v_s(t)\phi''(s)
\,ds\,dt.\label{Psif}
\end{eqnarray}
The Fubini--Tonelli theorem then shows $\langle\Omega_f'',\phi\rangle$ is given
by the same integral in \eqref{Psif}.  The two methods then agree.
\end{remark}

\section{Exchange formula}\label{sectionexchange}

The exchange formula is the key to inversion and convolution results in following
sections.

Denote the functions of bounded variation by $\bv$.
Note that functions of bounded variation are bounded and regulated on $\Rbar$,
i.e., they have left limits in $(-\infty,\infty]$ and right limits in $[-\infty,\infty)$.
See, for example, \cite{appell}.
\begin{theorem}[Exchange formula]\label{theoremexchange}
Let $f\in\alexc$.  Write $p_n(x)=x^n$. Define $g(s)=s^{-2}\int_0^sh(t)\,dt$ for a
function $h\in\bv$ that is right continuous such that $h/p_1\in L^1(\R)$.
Then,\\
(a) $g$ is Lipschitz continuous on compact sets in $\R\setminus\{0\}$,\\
(b) $g\in L^1(\R)$,\\
(c) $p_1g\in\bv$, $p_1g(s)=o(1)$ as $s\to 0$ or as $\abs{s}\to\infty$,\\
(d) $p_2g'\in\bv$, $p_2g'(s)=o(1)$ as $s\to0$ and 
$\lim_{s\to\pm\infty}p_2g'(s)=
\lim_{s\to\pm\infty}h(s)$.\\
(e) The integral $\intinf \fhat g$ exists and
\begin{eqnarray*}
\intinf \fhat g & = & \intinf\Omega_f(s)\,dg'(s)=
\intinf\frac{\Omega_f(s)}{s^2}\left(dh(s)-\frac{4h(s)\,ds}{s}
+\frac{6}{s^2}\int_0^sh(t)\,dt\,ds\right)\\
 & = &
\intinf\frac{\Omega_f(s)}{s^2}(d(p_2g')(s)-2d(p_1g)(s)+2g(s)\,ds).
\end{eqnarray*}
There are the estimates
\begin{eqnarray*}
\left|\intinf \fhat g\right| & \leq & c\norm{f}\intinf s^2\abs{dg'(s)},\\
\left|\intinf \fhat g\right| & \leq & c\norm{f}({\rm var}(h)+10\norm{h/p_1}_1),\\
\abs{\intinf\fhat g} & \leq & c\norm{f}({\rm var}(p_2g')+2{\rm var}(p_1g) +2\norm{g}_1),
\end{eqnarray*}
where $c$ is given in Theorem~\ref{theoremOmegaf}.\\
(f) The exchange formula is $\intinf \fhat g=\intinf f\ghat$ and
$\ghat\in\bv\cap C(\R)$ so that $\left|\intinf \fhat g\right|\leq \norm{f}{\rm var}(\ghat)$.
\end{theorem}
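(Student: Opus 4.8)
The plan is to dispose of the structural facts (a)--(d) about $g$ first, build (e) on them, and then get (f) by transferring the two derivatives in $\fhat=\Omega_f''$ onto the kernel $v_s$. Throughout I write $H(s)=\int_0^sh(t)\,dt$, so that $g=H/p_2$, and I begin by recording the consequence of the hypotheses that does the real work: since $h\in\bv$ has one-sided limits at $0$ and at $\pm\infty$, the requirement $h/p_1\in L^1(\R)$ forces $h(0)=0$ and $h(\pm\infty)=0$, as otherwise $\abs{h(t)/t}$ fails to be integrable near $0$ or near $\infty$. For (a), $H$ is Lipschitz (its a.e.\ derivative $h$ is bounded) and $1/p_2$ is smooth off $0$, so $g$ is a product of bounded Lipschitz functions on each compact set avoiding $0$. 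For (b) I would use Tonelli to get $\int_0^1\abs{g}\le\int_0^1s^{-2}\int_0^s\abs{h}\,dt\,ds=\int_0^1\abs{h(t)}/t\,dt$, and likewise on $(1,\infty)$ after writing $H(s)=H(1)+\int_1^sh$, yielding the clean bound $\norm{g}_1\le\norm{h/p_1}_1$ that later produces the constant $10$. For (c) and (d) I would differentiate: $(p_1g)'=h/p_1-g$ and $p_2g'=h-2p_1g$, so $p_1g$ and $p_2g'$ are absolutely continuous off $0$ with a.e.\ derivatives in $L^1(\R)$, hence of bounded variation, and the limits at $0$ and $\pm\infty$ follow from $h(0)=0$, $h(\pm\infty)=0$ and a Ces\`aro argument giving $H(s)/s\to0$ at both ends.

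For (e) the key computation is purely in $g$: differentiating $g=H/p_2$ twice gives, as Stieltjes differentials, $p_2\,dg'=dh-4(h/p_1)\,ds+6g\,ds$. Substituting $\Omega_f=p_2\cdot(\Omega_f/p_2)$ and using that $\Omega_f/p_2$ is bounded by $c\norm{f}$ (Theorem~\ref{theoremOmegaf}(a)) and continuous on $\Rbar$ (Theorem~\ref{theoremOmegaf}(b)), both displayed forms of $\intinf\Omega_f\,dg'$ follow, and the integral converges because $dh$ has finite total variation and $h/p_1,g\in L^1(\R)$. The three estimates then drop out of $\abs{\Omega_f(s)}\le c\norm{f}s^2$ together with $\norm{g}_1\le\norm{h/p_1}_1$. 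The one genuinely distributional point is the first equality $\intinf\fhat g=\intinf\Omega_f\,dg'$: since $\fhat=\Omega_f''$, this says that integrating the distribution $\fhat$ against $g$ amounts to moving two derivatives onto the continuous function $\Omega_f$. I would justify it by approximating $g$ by $\phi_n\in\Sc$ with $\phi_n''\to dg'$ in an $\Omega_f$-integrably dominated fashion, so that $\langle\fhat,\phi_n\rangle=\intinf\Omega_f\phi_n''\to\intinf\Omega_f\,dg'$ while $\langle\fhat,\phi_n\rangle\to\langle\fhat,g\rangle$; the boundary terms implicit in a direct integration by parts are controlled by the $o(s^2)$ decay of $\Omega_f$ against the $O(s^{-2})$ decay of $g'$ coming from $p_2g'\in\bv$.

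For (f) I would first identify $\ghat$. Using $\partial_s^2v_s(t)=e^{-ist}$ and integrating by parts twice in $s$, with boundary terms killed by $g(s)\to0$, boundedness of $u_s(t)$ in $s$, and $v_s(t)g'(s)=O(s^{-1})\to0$, I obtain the representation $\ghat(t)=\intinf v_s(t)\,dg'(s)$. From this $\ghat\in\bv$ is immediate: for any partition, $\sum_j\abs{\ghat(t_{j+1})-\ghat(t_j)}\le\intinf{\rm var}_t(v_s)\,\abs{dg'(s)}$, and since $v_s(t)=s^2v_1(st)$ by \eqref{vs} we have ${\rm var}_t(v_s)=s^2\,{\rm var}(v_1)=cs^2$, whence ${\rm var}(\ghat)\le c\intinf s^2\,\abs{dg'(s)}<\infty$; continuity of $\ghat$ is the Riemann--Lebesgue continuity of the transform of the $L^1(\R)$ function $g$, and $\ghat$ vanishes at $\infty$, so $\abs{\intinf f\ghat}\le\norm{f}\,{\rm var}(\ghat)$ by \eqref{holder}. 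To prove the exchange equality $\intinf\fhat g=\intinf f\ghat$ I would sidestep a Stieltjes Fubini and argue by density: for $f\in\D$ (indeed any $f\in L^1(\R)$), $\fhat$ is the ordinary transform, $\fhat g\in L^1(\R)$, and classical Fubini--Tonelli gives $\intinf\fhat(s)g(s)\,ds=\intinf f(t)\ghat(t)\,dt$; as both sides are bounded linear functionals of $f$ on $\alexc$ in the Alexiewicz norm --- the left by (e), the right by ${\rm var}(\ghat)$ --- and $\D$ is dense in $\alexc$ (its primitives being dense in $\balexc$ uniformly), the identity extends to all $f\in\alexc$.

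I expect the main obstacle to be the first equality of (e), $\intinf\fhat g=\intinf\Omega_f\,dg'$: one must make the distributional integral of the second-derivative distribution $\fhat$ against the only piecewise-regular $g$ precise and show the two integrations by parts leave no surviving boundary contribution, which is exactly where the sharp growth $\Omega_f(s)=o(s^2)$ and the decay encoded in $p_2g'\in\bv$, $p_1g\to0$ must be balanced against one another. The representation $\ghat(t)=\intinf v_s(t)\,dg'(s)$ in (f) carries the identical difficulty in the $s$-variable, and it is the density reduction that lets me avoid a Fubini theorem for the Stieltjes measure $dg'$ paired with the continuous primitive integral in $t$, which would otherwise be the awkward step.
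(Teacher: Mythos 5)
Your parts (a)--(d) are essentially the paper's own arguments, and they are correct (one small slip: $p_2g'=h-2p_1g$ is \emph{not} absolutely continuous away from the origin when $h$ has jumps; its membership in $\bv$ follows directly from that identity, and the right-continuity hypothesis on $h$ exists precisely to pin down the pointwise values of $g'$ at jump points, which the paper handles via one-sided derivatives). Your route to (f) is genuinely different from the paper's: you derive the representation $\ghat(t)=\intinf v_s(t)\,dg'(s)$ by two integrations by parts in $s$, read off ${\rm var}(\ghat)\leq {\rm var}(v_1)\intinf s^2\abs{dg'(s)}$ directly, and then extend the classical exchange identity from a dense subspace by continuity in the Alexiewicz norm. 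The paper instead interchanges the $t$-integral with the Stieltjes integral $dg'(s)$ via a Fubini-type theorem (a modification of Theorem~\ref{theoremfubini}) and obtains $\ghat\in\bv$ \emph{a posteriori} by duality, since the estimates of (e) exhibit $\ghat$ as a bounded linear functional on $\alexc$. Your version of (f) would be an attractive simplification if it rested on solid ground.

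It does not, because of (e), and this is a genuine gap rather than a matter of presentation. The assertion ``the integral $\intinf\fhat g$ exists'' refers to integration of the distribution $\fhat=\Omega_f''$ in the senses the paper has already fixed, and the paper's proof gives it meaning by the key device you are missing: write $\Omega_f=p_2A$ with $A=\Omega_f/p_2\in\balexc$ vanishing at $\pm\infty$, expand $\fhat=p_2A''+4p_1A'+2A$ by the product rule, and show the three integrals $\intinf A''p_2g$ (a second-order distributional integral, $A''\in\alexx$, \cite{talvilaacrn}), $\intinf A'p_1g$ (an integral in $\alexc$, since $p_1g\in\bv$) and $\intinf Ag$ (Lebesgue) exist separately, before reassembling them into $\intinf\Omega_f(s)\,dg'(s)$ by limits over subintervals using Proposition~\ref{prop6.2}, Proposition~\ref{prop6.3} and the Hake theorem. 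Your substitute---pick $\phi_n\in\Sc$ with $\phi_n''\to dg'$ in a dominated fashion and pass to the limit in $\langle\fhat,\phi_n\rangle$---is circular as written: the claimed convergence $\langle\fhat,\phi_n\rangle\to\langle\fhat,g\rangle$ presupposes the very pairing whose existence is to be proved, since $g$ is not a test function and $\fhat$ is a priori only a tempered distribution. At best the procedure \emph{defines} a surrogate value equal to $\intinf\Omega_f\,dg'$; it does not prove existence of the integral in the theorem's sense, and even the construction is nontrivial ($dg'$ has atoms at the jumps of $h$, $g$ may be unbounded at the origin since only $p_1g=o(1)$ there, and the domination must be arranged against a weight $\Omega_f$ growing like $s^2$). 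The gap then propagates into your (f): the density argument requires that $f\mapsto\intinf\fhat g$ be a bounded linear functional on $\alexc$ in the Alexiewicz norm and that it coincide with the classical Lebesgue integral for $f\in\D$, and both of those facts are exactly the content of (e).
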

The proof uses Stieltjes integrals (for example,
\cite{mcleod} or \cite{monteiroslaviktvrdy}) and distributional integrals (\cite{talviladenjoy}, \cite{talvilaacrn}).

Different forms of the hypotheses are given in Corollary~\ref{corollaryexchange}
and Corollary~\ref{corollarytauberian}.

\begin{proof}
(a) This follows since $h$ is bounded.

(b) By
the Fubini--Tonelli theorem, 
\begin{eqnarray*}
\norm{g}_1 & \leq & \int_0^\infty\int_0^s\abs{h(t)}\,dt\,\frac{ds}{s^2}
+ \int_{-\infty}^0\int_s^0\abs{h(t)}\,dt\,\frac{ds}{s^2}\\
 & = & \int_0^\infty\abs{h(t)}\int_t^\infty\frac{ds}{s^2}\,dt
+\int_{-\infty}^0\abs{h(t)}\int_{-\infty}^t\frac{ds}{s^2}\,dt =\norm{h/p_1}_1.
\end{eqnarray*}

(c) Due to (a) and the Fubini--Tonelli theorem,
$$
{\rm var}(p_1g)  =  \intinf\left|-\frac{1}{s^2}\int_0^sh(t)\,dt+\frac{h(s)}{s}\right|\,ds
  \leq  2\norm{h/p_1}_1.
$$
Since $p_1g$ is of bounded variation,
we can then write $\lim_{s\to0^+}sg(s)=\alpha\in\R$.
If $\alpha>0$ then there is $\delta>0$ such that for all $0<s<\delta$ we have
$g(s)>\alpha/(2s)$.  But then $g\not\in L^1(\R)$.  Similarly, if $\alpha<0$.
Hence, $\lim_{s\to 0}sg(s)=0$.

Since $g\in L^1(\R)$ and $\lim_{s\to\infty}sg(s)$ exists we must have $p_1g(s)\to0$ as
$\abs{s}\to\infty$.

(d) Let $H(s)=\int_0^sh(t)\,dt$.  Since $p_2g'=-2p_1g+h$ almost everywhere, this gives 
${\rm var}(p_2g')\leq 4\norm{h/p_1}_1+{\rm essvar}(h)$.  But with $h$ right continuous,
the left and right derivatives of $g$ exist everywhere except perhaps at the origin:
\begin{eqnarray*}
\partial_+g(s) & = & \lim_{x\to 0^+}\frac{g(s+x)-g(s)}{x}=-\frac{2H(s)}{s^3}+\frac{h(s+)}{s^2}
=-\frac{2H(s)}{s^3} + \frac{h(s)}{s^2}\\
\partial_-g(s) & = & \lim_{x\to 0^-}\frac{g(s+x)-g(s)}{x}=-\frac{2H(s)}{s^3}+\frac{h(s-)}{s^2}.
\end{eqnarray*}
This suffices for computing the variation of functions containing $g'$ or $h$ or in the Stieltjes
integrals below that contain $dg'$ or $dh$.  As such,
${\rm essvar}(h)={\rm var}(h)$.

Since $h\in\bv$ and $h/p_1\in L^1(\R)$ it follows that $h(s)=o(1)$ as $s\to 0$.
Then $H(s)=o(s)$ as $s\to 0$.  This shows $p_2g'(s)=o(1)$ as $s\to0$.

And, $\lim_{s\to\pm\infty}p_2g'(s)=-2\lim_{s\to\pm\infty}p_1g(s)+\lim_{s\to\pm\infty}h(s)=
\lim_{s\to\pm\infty}h(s)$.

(e) Write $\Omega_f=p_2A$.  From Theorem~\ref{theoremOmegaf}, $A(s)=o(1)$ as $\abs{s}\to\infty$
and $A\in\balexc$.  

By the usual properties of distributional
derivatives, $\Omega_f''=p_2A''+4p_1A'+2A$.  Write $\intinf \fhat g=
I_1+4I_2+2I_3$ where
$$
I_1=\intinf A''p_2g,\quad I_2=\intinf A'p_1g,\quad I_3=\intinf A(s)g(s)\,ds.
$$

Integral $I_1$ exists as a second order distributional integral, $A''\in\alexx$ 
\cite[\S2]{talvilaacrn}, since $p_2g$ has a primitive that is of bounded
variation.
Since $h\in\bv$ it is bounded, so
$I_1=-\intinf A'h=\intinf A(s)\,dh(s)$ by Definition~2.6 in \cite{talvilaacrn}
and integration by parts in $\alexc$ \eqref{parts}.

Integral $I_2$ exists in $\alexc$ since $p_1g$ is of bounded variation.

Integral $I_3$ exists in $L^1(\R)$ since $g\in L^1(\R)$.

Since $p_1g$ is bounded, we can
integrate by parts \eqref{parts}
to write 
$I_2=\intinf A'H/p_1=-\intinf A(s)\,d(H(s)/s)$.

Now let $0<x<y<\infty$.  Using Proposition~\ref{prop6.2} and Proposition~\ref{prop6.3},
$$
-\int_0^\infty A'p_1g  =  \lim_{\substack{y\to\infty\\x\to 0^+}}\int_x^yA(s)(
-h(s)/s+H(s)/s^2)\,ds.
$$

Consider the integrands in $I_1+4I_2+2I_3$ on $(0,\infty)$. 
From the above results this gives
$$
\lim_{\substack{y\to\infty\\x\to 0^+}}\int_x^yA(s)\left(dh(s)-\frac{4h(s)\,ds}{s}
+\frac{6H(s)\,ds}{s^2}\right).
$$
Notice that integrals in $A''\in\alexx$ need not exist on subintervals so it is 
necessary to rewrite $\intinf A''p_2g$ as an integral in $\alexc$ before considering
limits over subintervals.

And, again using Proposition~\ref{prop6.2} and Proposition~\ref{prop6.3},
\begin{eqnarray}
\int_0^\infty A(s)s^2\,dg'(s) & = & \lim_{\substack{y\to\infty\\x\to 0^+}}\int_x^y
A(s)s^2\,d\left(\frac{h(s)}{s^2}-\frac{2H(s)}{s^3}\right)\notag\\
 & = & \lim_{\substack{y\to\infty\\x\to 0^+}}\int_x^y
A(s)\left(dh(s)-\frac{4h(s)\,ds}{s}+\frac{6H(s)\,ds}{s^2}\right).\label{fhatgestimateh}
\end{eqnarray}

Similarly, as
$x\to 0^-$ and $y\to-\infty$.  Using the Hake theorem for
$\alexc$ \cite[Theorem~25]{talviladenjoy},
$\intinf \fhat g=\intinf\Omega_f''g=\intinf A(s)s^2\,dg'(s)=\intinf\Omega_f(s)\,dg'(s)$.

From \ref{fhatgestimateh},
$$
\intinf\fhat g=\intinf\frac{\Omega_f(s)}{s^2}\left(dh(s)-\frac{4h(s)\,ds}{s}
+\frac{6H(s)\,ds}{s^2}\right),
$$
from which $\abs{\intinf\fhat g}\leq c\norm{f}({\rm var}(h)+10\norm{h/p_1}_1)$.
Similarly,
$$
\intinf\fhat g=\intinf\frac{\Omega_f(s)}{s^2}(d(p_2g')(s)-2d(p_1g)(s)+2g(s)\,ds),
$$
from which $\abs{\intinf\fhat g}\leq c\norm{f}({\rm var}(p_2g')+2{\rm var}(p_1g) +2\norm{g}_1)$.

For each $0<x<y$, the results above show that 
$\int_x^ys^2dg'(s)=\int_x^yd(p_2g'(s))-2\int_x^yd(p_1g(s))+2\int_x^yg(s)\,ds$.
The properties of $g$, $p_1g$ and $p_2g'$ in (b), (c) and (d) then show
$\intinf s^2dg'(s)=\intinf d(p_2g'(s))-2\intinf d(p_1g(s))+2\intinf g(s)\,ds$.
The three integrals on the right converge absolutely so the integral on the
left also converges absolutely.

From Theorem~\ref{theoremOmegaf}, ${\rm var}(v_s)=cs^2$.  Note that
$$
\norm{v_s}_\infty=s^2\max_{t\geq 0}\frac{\sqrt{4\sin^2(t/2)+(t-\sin(t))^2}}{t^2}=s^2/2.
$$
Since $\intinf s^2\abs{dg'(s)}$ exists,
a minor modification of Theorem~\ref{theoremfubini} now gives
(inserting a variable in $f$ and $v_s$ to avoid confusion in the iterated integrals)
$$
\intinf \fhat g  =  \intinf\intinf v_s(t)f(t)\,dt\,dg'(s)
  =  \intinf f(t)\intinf v_s(t)\,dg'(s)\,dt.
$$
Since $\partial^2 v_s(t)/\partial s^2=e^{-ist}$,
integrating by parts twice and using (c) and (d), now shows that $\intinf \fhat g=\intinf f\ghat$.

(f) Since the estimates in part (e) holds
for all $f\in\alexc$ then $\ghat$ is in the dual
space of $\alexc$, which is  the functions of essential bounded variation
\cite[Corollary~9]{talviladenjoy}.  Since $g\in L^1(\R)$  then $\ghat$ is continuous,
of bounded variation, and vanishes at $\pm\infty$.
The H\"older inequality \eqref{holder} gives the final estimate.
\end{proof}

\begin{corollary}\label{corollaryexchange}
Let $f\in\alexc$.  Let $g\in L^1(\R)$ such that
$g$ is absolutely continuous on compact intervals in $\R\setminus\{0\}$,
$p_1g\in\bv$ and $p_2g'\in\bv$.
Then $\ghat\in\bv\cap C(\R)$ and
$\intinf \fhat g=\intinf f \ghat$.
\end{corollary}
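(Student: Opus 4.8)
The plan is to prove this as a reduction to Theorem~\ref{theoremexchange}: I would reconstruct the function $h$ from which the given $g$ arises, check that $g$ is of the exact form required there, and then simply quote its conclusions. First I would set $h\defeq p_2g'+2p_1g$, where $g'$ denotes the a.e.\ derivative of $g$, which exists since $g$ is absolutely continuous on compact subsets of $\R\setminus\{0\}$. As $\bv$ is closed under addition, the hypotheses $p_2g'\in\bv$ and $p_1g\in\bv$ give $h\in\bv$; replacing $h$ by its right-continuous representative alters it only on a countable set and hence changes none of the integrals below, so I may assume $h$ is right continuous, as the theorem requires.

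Next I would verify that $g$ has exactly the form $g(s)=s^{-2}\int_0^sh(t)\,dt$ demanded in Theorem~\ref{theoremexchange}. Since $p_1g\in\bv$ is bounded on $\R$, it follows that $p_2g(s)=s\,p_1g(s)\to 0$ as $s\to 0$. The function $p_2g=s^2g(s)$ is absolutely continuous on each compact subinterval of $(0,\infty)$ with $(p_2g)'=2p_1g+p_2g'=h$ almost everywhere, so integrating from $\epsilon$ to $s$ and letting $\epsilon\to 0^+$ yields $p_2g(s)=\int_0^sh(t)\,dt$, and the same computation applies for $s<0$. This is the required representation.

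The one hypothesis of Theorem~\ref{theoremexchange} that is not visible at a glance is $h/p_1\in L^1(\R)$, and securing it is the crux of the argument. Here I would use the identity $h/p_1=p_1g'+2g=(p_1g)'+g$, which follows from $(p_1g)'=g+p_1g'$ together with $p_2/p_1=p_1$. The a.e.\ derivative of $p_1g\in\bv$ obeys $\int_{\R}\abs{(p_1g)'}\leq{\rm var}(p_1g)<\infty$, and $g\in L^1(\R)$ by hypothesis, so $\norm{h/p_1}_1\leq{\rm var}(p_1g)+\norm{g}_1<\infty$. This identity is precisely what tames the apparent $1/s$ singularity of $h/p_1$ at the origin and at infinity and makes integrability transparent.

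With $h\in\bv$ right continuous, $h/p_1\in L^1(\R)$, and $g(s)=s^{-2}\int_0^sh$, every hypothesis of Theorem~\ref{theoremexchange} is in force, so parts (e) and (f) apply directly: $\intinf\fhat g$ exists, $\ghat\in\bv\cap C(\R)$, and $\intinf\fhat g=\intinf f\ghat$. I expect the main obstacle to be purely the reduction bookkeeping at $s=0$, namely confirming that the singular behaviours of $p_1g'$ and of $h/p_1$ are controlled by the $\bv$ and $L^1$ hypotheses, rather than any fresh analytic estimate, since all the substantive work (existence of the Stieltjes and distributional integrals and the double integration by parts against $v_s$) has already been carried out in Theorem~\ref{theoremexchange}.
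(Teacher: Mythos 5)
Your proof is correct and takes essentially the same route as the paper's: both define $h=p_2g'+2p_1g$, observe $h\in\bv$ with $g(s)=s^{-2}\int_0^sh(t)\,dt$, and obtain $\norm{h/p_1}_1\leq{\rm var}(p_1g)+\norm{g}_1$ from the identity $h/p_1=(p_1g)'+g$, then invoke Theorem~\ref{theoremexchange}. You in fact supply details the paper leaves implicit, namely the right-continuous modification of $h$ and the limiting argument (using boundedness of $p_1g$) that justifies the representation $p_2g(s)=\int_0^sh(t)\,dt$.
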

\begin{proof}
Define $h(s)=s^2g'(s)+2sg(s)$.  Then $h\in\bv$ and $g(s)=s^{-2}\int_0^sh(t)\,dt$.
Hence, $g$ is absolutely continuous except perhaps at the origin.
As well,
$$
\norm{h/p_1}_1=\intinf\abs{sg'(s)+2g(s)}\,ds=\intinf\abs{(p_1g)'(s)+g(s)}\,ds\leq
{\rm var}(p_1g)+\norm{g}_1.
$$
\end{proof}

\begin{remark}\label{remarktranslate}
The conditions in Theorem~\ref{theoremexchange}, Corollary~\ref{corollaryexchange} 
and Corollary~\ref{corollarytauberian} can be translated.
Let $a\in\R$.  The translation is written $\tau_ag(x)=g(x-a)$.  Let $e_x(s)=e^{xs}$.  
If the exchange identity is written as
$$
\intinf(\tau_a\fhat)g=\intinf \fhat(\tau_{-a}g)=\intinf fe_{ia}\ghat$$
then the conditions in Theorem~\ref{theoremexchange} are applied at $a$ 
instead of at $0$.  For example, (c) becomes
$x\mapsto (x-a)g(x)\in\bv$ and $(x-a)g(x)=o(1)$ as $x\to a$.
Then $x\mapsto e^{iax}\ghat(x)\in\bv$.

And, $$
\intinf\fhat e_{ia}g=\intinf f(\tau_a g),$$
provided $e_{ia}g$ satisfies the hypotheses of Corollary~\ref{corollarytauberian}.
It then follows that $\tau_ag$ and hence $g$ are in $\bv$.
\end{remark}

The hypotheses of Theorem~\ref{theoremexchange} can be rewritten as
integral conditions that ensure a Fourier transform is of bounded variation.
\begin{corollary}\label{corollarytauberian}
Let $g\in L^1(\R)$ such that for each $\eta>0$, $g$ is Lipschitz
continuous on $\R\setminus(-\eta,\eta)$ and $g'$ is equivalent to a function of 
bounded variation on $\R\setminus(-\eta,\eta)$.
Suppose $\intinf\abs{s}\abs{g'(s)}\,ds<\infty$ and
$\intinf s^2\abs{dg'(s)}<\infty$.
Then the conclusions of Theorem~\ref{theoremexchange} hold.
\end{corollary}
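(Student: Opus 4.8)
The plan is to reduce Corollary~\ref{corollarytauberian} to Theorem~\ref{theoremexchange} by exhibiting the function $h$ that the theorem requires. Motivated by the computation in Corollary~\ref{corollaryexchange}, I would set $h(s)=s^2g'(s)+2sg(s)=(p_2g)'(s)$, where $g'$ is taken to be the right continuous representative of the bounded variation function to which it is equivalent on each set $\R\setminus(-\eta,\eta)$. Since $g$ is locally Lipschitz, hence continuous, away from the origin, $p_1g$ is continuous there, so the only possible discontinuities of $h$ come from $g'$; choosing $g'$ right continuous then makes $h$ right continuous. The bulk of the work is to verify the three standing hypotheses of Theorem~\ref{theoremexchange} on this $h$: that $h\in\bv$, that $h/p_1\in L^1(\R)$, and that $g(s)=s^{-2}\int_0^sh(t)\,dt$.

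For membership in $\bv$ I would write $h=p_2g'+2p_1g$ and bound the variation of each summand by the Leibniz rule for Stieltjes differentials (Proposition~\ref{prop6.2}, applied on compact subintervals of $(0,\infty)$ and $(-\infty,0)$ and then passing to the limit). For the first summand, $d(p_2g')=2sg'(s)\,ds+s^2\,dg'(s)$, so that ${\rm var}(p_2g')\leq 2\intinf\abs{s}\abs{g'(s)}\,ds+\intinf s^2\abs{dg'(s)}$, both terms finite by hypothesis. The essential point is that the weight $s^2$ absorbs any singular behaviour of $g'$ at the origin, so $p_2g'$ is globally of bounded variation even though $g'$ itself is only assumed BV away from the origin; this is exactly what distinguishes the present Tauberian conditions from those of Corollary~\ref{corollaryexchange}. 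For the second summand, $d(p_1g)=g(s)\,ds+sg'(s)\,ds$ gives ${\rm var}(p_1g)\leq\norm{g}_1+\intinf\abs{s}\abs{g'(s)}\,ds<\infty$. Adding these, $h\in\bv$. The integrability condition is immediate: $h/p_1=sg'+2g$, so $\norm{h/p_1}_1\leq\intinf\abs{s}\abs{g'(s)}\,ds+2\norm{g}_1<\infty$.

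The step I expect to be the main obstacle is recovering the identity $g(s)=s^{-2}\int_0^sh$, since it forces a boundary term to vanish. I would argue that on $(0,\infty)$ the function $s\mapsto s^2g(s)$ is locally absolutely continuous with a.e.\ derivative $h$, so that $s^2g(s)-c^2g(c)=\int_c^sh$ for $0<c<s$. Fixing $s$ and letting $c\to0^+$, the right side converges because $h$ is bounded, hence integrable near $0$, so $\lim_{c\to0^+}c^2g(c)$ exists. This limit must be $0$: if it equalled some $L\neq0$ then $g(s)\sim L/s^2$ as $s\to0^+$, contradicting $g\in L^1(\R)$. Hence $s^2g(s)=\int_0^sh(t)\,dt$ for $s>0$, and the same reasoning on $(-\infty,0)$ completes the identity. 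With $h$ so constructed, all hypotheses of Theorem~\ref{theoremexchange} are met, and every conclusion of that theorem---in particular the exchange formula $\intinf\fhat g=\intinf f\ghat$ together with $\ghat\in\bv\cap C(\R)$---follows at once.
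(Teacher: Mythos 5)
Your proposal is correct and takes essentially the same approach as the paper: both proofs reduce to Theorem~\ref{theoremexchange} by setting $h=(p_2g)'=p_2g'+2p_1g$ with the right-continuous representative of $g'$, deduce $p_1g\in\bv$ and $p_2g'\in\bv$ from the two integral hypotheses, and check $h/p_1\in L^1(\R)$. The only cosmetic differences are that the paper bounds the variations by explicit Riemann--Stieltjes partition sums where you invoke the Stieltjes product rule (Propositions~\ref{prop6.2} and \ref{prop6.3}) on compact subintervals, and that you spell out the verification of the primitive identity $g(s)=s^{-2}\int_0^s h(t)\,dt$ (via the vanishing of $c^2g(c)$ as $c\to0^+$, forced by $g\in L^1(\R)$), a step the paper leaves implicit.
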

\begin{proof}
First note that $\intinf\abs{s}\abs{g'(s)}\,ds=\intinf\abs{s}\abs{dg(s)}$,
by Proposition~\ref{prop6.2} and Proposition~\ref{prop6.3}.

Let $\epsilon>0$.  Let $0<x<y<\infty$.  The Riemann--Stieltjes integral 
$\int_x^y\abs{s}\abs{dg(s)}$ and the Riemann integral $\int_x^y\abs{g(s)}\,ds$ exist.
There is then $\delta>0$ such that for any partition 
$x=x_0<x_1<\ldots<x_N=y$ satisfying $\max_{1\leq n\leq N}(x_n-x_{n-1})<\delta$ we have
$$
\abs{\sum_{n=1}^N\abs{z_n}\abs{g(x_{n})-g(x_{n-1})}-\int_x^y\abs{s}\abs{dg(s)}}  <  \epsilon
$$
and
$$
\abs{\sum_{n=1}^N\abs{g(z_n)}(x_{n}-x_{n-1})-\int_x^y\abs{g(s)}\,ds}  <  \epsilon,
$$
for each tag $z_n\in[x_{n-1},x_n]$.

Now consider
\begin{eqnarray*}
\sum_{n=1}^N\abs{p_1g(x_n)-p_1g(x_{n-1})} & \leq & 
\sum_{n=1}^N\abs{x_{n}}\abs{g(x_n)-g(x_{n-1})} + \sum_{n=1}^N\abs{g(x_{n-1})}(x_n-x_{n-1})\\
 & < & \int_0^\infty\abs{s}\abs{dg(s)}+\int_0^\infty\abs{g(s)}\,ds+2\epsilon.
\end{eqnarray*}
Taking limits $x\to0^+$ and $y\to\infty$ shows
$p_1g$ is of bounded variation on $[0,\infty)$.  Similarly on $(-\infty,0]$.

In a similar manner, there is the inequality
\begin{eqnarray*}
\sum_{n=1}^N\abs{p_2g'(x_n)-p_2g'(x_{n-1})} & \leq &
\sum_{n=1}^N x_{n}^2\abs{g'(x_{n})-g'(x_{n-1})}
+\sum_{n=1}^N
\abs{g'(x_{n-1})}(x_n^2-x_{n-1}^2)\\
 & < & \int_0^\infty s^2\abs{dg'(s)}+\int_0^\infty\abs{g'(s)}\,d(s^2)+2\epsilon.
\end{eqnarray*}
Using limits over subintervals as above, Proposition~\ref{prop6.3} shows
$$
\abs{\intinf \abs{g'(s)}\,d(s^2)}=2\abs{\intinf \abs{g'(s)}s\,ds}
\leq 2\intinf \abs{g'(s)}\abs{s}\,ds.
$$  
And,
$p_2g'\in\bv$.

Now define $h(s)=s^2g'(s+)+2sg(s+)$.  Then $h$ is right continuous and of bounded variation.
And, $\norm{h/p_1}_1\leq\norm{p_1g_1'}_1+2\norm{g}_1$.  The hypotheses of
Theorem~\ref{theoremexchange} are then satisfied.
\end{proof}
The results of Corollary~\ref{corollarytauberian} can be translated as with 
Remark~\ref{remarktranslate}.

\begin{remark}\label{remarkghatbvconditions}
Any condition giving $\ghat\in\bv$ that requires $g\in L^1(\R)$ cannot be necessary
since then $\ghat$ is continuous. 
Also, the proof of Theorem~\ref{theoremexchange} assumes $A\in\balexc$, i.e., continuous,
but from
Theorem~\ref{theoremOmegaf} it is known that $A$ is H\"older continuous.  This
extra smoothness in $A$ means 
the smoothness conditions on $g$ are not necessary.
See the following example.

Other sufficient conditions that ensure $\ghat\in\bv\cap C^1(\R)$ are given in
\cite[Proposition~7.2]{talvilaLpFourier}.
The simple condition that $g\in C^2(\R)$ such that $x^mg^{(n)}(x)$ is bounded
on $\R$ for all integers $0\leq m\leq 4$ and $0\leq n\leq 2$ suffices.  This
condition is also translation invariant.  The Fourier transform
in $\alexc$ can then be defined as $\langle\fhat,g\rangle=\langle f,\ghat\rangle$ for
such functions.

A similar
result showed when the Fourier transform of a function was in $L^q(\R)$ for
some $1<q\leq\infty$ \cite[Theorem~6.1]{talvilaLpFourier}.
\end{remark}

\begin{example}\label{exampleexchange}
(a) Let $g$ be given by the modified Bessel function $K_0$, extended as an even function
to $(-\infty,0)$.  Since
$$
K_0(x)\sim\left\{\begin{array}{cl}
-\log(x), & x\to 0^+\\
\sqrt{\frac{\pi}{2x}}e^{-x}, & x\to\infty,
\end{array}
\right.
$$
then $g$
satisfies the hypotheses of 
Corollary~\ref{corollarytauberian} and gives 
$\ghat(s)=\pi/\sqrt{s^2+1}$ \cite[1.12(40)]{erdelyi}.  Then
$\ghat$ is of bounded variation but $g$ itself is not bounded and hence
not of bounded variation.

(b) Let $g(x)=(1-x^2)^{\nu-1/2}\chi_{(-1,1)}(x)$.  Then
$\ghat(s)=\sqrt{\pi}\,2^\nu\Gamma(\nu+1/2)\abs{s}^{-\nu}J_\nu(\abs{s})$ \cite[1.3(8)]{erdelyi}, for
$\nu>-1/2$.  Since $g$ is of compact support then $\ghat$ is analytic.
The asymptotic formula
$\ghat(s)\sim 2^{\nu+1/2}\Gamma(\nu+1/2)s^{-(\nu+1/2)}\cos(s-\nu\pi/2-\pi/4)$
as $s\to\infty$ shows $\ghat\in\bv$ when $\nu>1/2$.  However, the conditions of
Theorem~\ref{theoremexchange} and Corollaries \ref{corollaryexchange} and
\ref{corollarytauberian} are satisfied exactly when $\nu\geq 3/2$.
Note that in the case $\nu=3/2$, $g'$ can have jump discontinuities, as discussed
in the proof of Theorem~\ref{theoremexchange}(d).
\end{example}

\section{Inversion}\label{sectioninversion}

The exchange formula leads to inversion in the Alexiewicz norm using a summability kernel.
Pointwise inversion follows for $L^p(\R)$ functions ($1<p<\infty$) by twice
differentiating $\Omega_{\fhat}$. Recall Definition~\ref{defnpn}, $p_n(x)=x^n$.

\begin{theorem}[Inversion in $\alexc$]\label{theoreminversion}
Let $f\in\alexc$.
Let $\psi\in\Lone$ such that $\intinf\psi(t)\,dt=1$,
$\psihat$ is absolutely continuous, $p_2\psihat\in L^1(\R)$,
$p_2\psihat\,'\in L^1(\R)$ and $p_2\psihat\,'\in\bv$.
For each $a>0$ define $\psi_a(x)=\psi(x/a)/a$.  Define a family of kernels $K_a(s)=\widehat{\psi_a}(s)$.
Let $e_x(s)=e^{xs}$ and $I_a[f](x)=(1/(2\pi))\intinf e_{ix}K_a\fhat$.
Then $\lim_{a\to0^+}\norm{f-I_a[f]}=0$.
\end{theorem}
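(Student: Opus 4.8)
The plan is to use the exchange formula, Theorem~\ref{theoremexchange}, to recognise $I_a[f]$ as the convolution $f*\psi_a$, and then to reduce the claim to the fact that $\{\psi_a\}$ is an approximate identity acting on the uniformly continuous primitive $F$.

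First I would record the scaling identity $K_a(s)=\psiahat(s)=\psihat(as)$, which follows by the substitution $u=t/a$ in $\intinf\est\psi(t/a)\,dt/a$. Fix $x\in\R$ and $a>0$ and set $g(s)=e^{ixs}K_a(s)$. The hypotheses on $\psihat$ — absolute continuity, $p_2\psihat\in\Lone$, $p_2\psihat\,'\in\Lone$ and $p_2\psihat\,'\in\bv$ — are exactly what is needed so that, after the dilation $s\mapsto as$, the kernel $K_a$ satisfies $K_a\in\Lone$, $p_1K_a\in\Lone\cap\bv$ and $p_2K_a,\,p_2K_a'\in\Lone\cap\bv$; here one uses $\abs{p_1\psihat}\le\abs{\psihat}+\abs{p_2\psihat}$ together with the Riemann--Lebesgue boundedness of $\psihat$ to get $\psihat\in\Lone$ in the first place. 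The modulation factor $e^{ixs}$ contributes only the lower-order terms $ix\,e^{ixs}p_1K_a$ and $ix\,e^{ixs}p_2K_a$ to the relevant derivatives, and these are controlled by the $\Lone$ bounds just listed; consequently $g$ meets the hypotheses of Corollary~\ref{corollaryexchange} (equivalently, Remark~\ref{remarktranslate} applied with the modulation $e_{ix}$).

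With the hypotheses in hand, the exchange formula gives $\intinf\fhat g=\intinf f\ghat$, and I would compute $\ghat$ by Fourier inversion: since $\psihat\in\Lone$, one has $\widehat{K_a}(t)=2\pi\psi_a(-t)$, so $\ghat(t)=\widehat{K_a}(t-x)=2\pi\psi_a(x-t)$. Dividing by $2\pi$ identifies $I_a[f](x)=(f*\psi_a)(x)=\intinf f(t)\psi_a(x-t)\,dt$, the integral interpreted as the pairing of $f\in\alexc$ with the bounded-variation function $t\mapsto\psi_a(x-t)$, which lies in $\bv\cap C(\R)$ by the conclusion of Theorem~\ref{theoremexchange}. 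The next step is to show that the Alexiewicz primitive of $f*\psi_a$ is $F*\psi_a$. For fixed $x_1<x_2$ I would integrate $(f*\psi_a)(y)$ over $y\in[x_1,x_2]$ and interchange the order of integration using Theorem~\ref{theoremfubini}, applied with the integrand $\psi_a(y-t)\chi_{[x_1,x_2]}(y)$; conditions (i)--(iii) hold because ${\rm var}(\psi_a)$ is finite and $\psi_a$ is bounded, while the indicator confines the $y$-integration to a finite interval. An integration by parts in $\alexc$ (Theorem~\ref{theorempartsholder}) together with Proposition~\ref{prop6.3}, using $F(-\infty)=0$ and that the antiderivative of $\psi_a$ tends to $0$ and to $1$ at $\mp\infty$, collapses the boundary terms and yields $\int_{x_1}^{x_2}(f*\psi_a)=(F*\psi_a)(x_2)-(F*\psi_a)(x_1)$. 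Letting $x_1\to-\infty$ and using $(F*\psi_a)(-\infty)=0$ gives $\int_{-\infty}^{x}(f*\psi_a)=(F*\psi_a)(x)$.

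These two reductions give $\norm{f-I_a[f]}=\norm{F-F*\psi_a}_\infty$. Since $\intinf\psi=1$, I would write $F(x)-(F*\psi_a)(x)=\intinf\psi(v)\bigl(F(x)-F(x-av)\bigr)\,dv$ after the substitution $u=av$, whence $\norm{F-F*\psi_a}_\infty\le\intinf\abs{\psi(v)}\,\sup_{x\in\R}\abs{F(x)-F(x-av)}\,dv$. Because $F\in\balexc$ is continuous on $\Rbar$ it is bounded and uniformly continuous on $\R$, so $\sup_{x}\abs{F(x)-F(x-av)}\to0$ as $a\to0^+$ for each fixed $v$ and is dominated by $2\norm{F}_\infty$; dominated convergence then forces the integral to $0$. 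The main obstacle is the first part: correctly verifying that the modulated, dilated kernel $g=e_{ix}K_a$ satisfies the full list of bounded-variation and integrability hypotheses of the exchange theorem, and then pinning down $\ghat$ as the continuous, bounded-variation representative $2\pi\psi_a(x-\cdot)$, so that $I_a[f]=f*\psi_a$ with primitive $F*\psi_a$. Once this spatial-side convolution is in place, the summability argument is routine.
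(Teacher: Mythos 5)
Your proposal is correct, and its first half is essentially the paper's argument: fix $x$ and $a$, verify that $g=e_{ix}K_a$ satisfies the hypotheses of the exchange theorem, use pointwise Fourier inversion (valid since $\psi$ and $\psihat$ are in $\Lone$) to identify $\ghat(t)=2\pi\psi_a(x-t)$, and conclude $I_a[f](x)=f\ast\psi_a(x)$. The only difference there is cosmetic: you route the verification through Corollary~\ref{corollaryexchange}, while the paper checks the integral conditions of Corollary~\ref{corollarytauberian}; both are gateways to Theorem~\ref{theoremexchange}(f), and your bookkeeping (deriving $\psihat\in\Lone$ from Riemann--Lebesgue boundedness plus $p_2\psihat\in\Lone$, the scaled bounds $p_1K_a\in\Lone\cap\bv$ and $p_2K_a,\,p_2K_a'\in\Lone\cap\bv$, and the modulation terms carrying factors of $\abs{x}$) is sound. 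Where you genuinely depart from the paper is the final step: the paper simply cites \cite[Theorem~3.4]{talvilaconvolution} for $\lim_{a\to0^+}\norm{f-f\ast\psi_a}=0$, whereas you prove this approximation result from scratch --- first showing, via Theorem~\ref{theoremfubini} together with integration by parts \eqref{parts} and Proposition~\ref{prop6.3}, that the primitive of $f\ast\psi_a$ is $F\ast\psi_a$, so that $\norm{f-I_a[f]}=\norm{F-F\ast\psi_a}_\infty$, and then using uniform continuity of $F$ (a consequence of $F\in C(\Rbar)$) and dominated convergence. Your route makes the theorem self-contained at the cost of roughly a page; the paper's citation buys brevity and covers convolution approximation in greater generality. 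One small point to make explicit in a final write-up: under the stated hypotheses $\psi_a$ need only agree almost everywhere with the continuous bounded-variation function $t\mapsto\ghat(t)/(2\pi)$ shifted to $x$, so the pairing $\intinf f(t)\psi_a(x-t)\,dt$ and the Fubini/parts computation should consistently use that continuous representative; almost-everywhere equivalence is harmless in your closing Lebesgue-integral estimates, but not automatically so in Stieltjes-type pairings against $f\in\alexc$.
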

\begin{proof}
The conditions on $\psi$ also hold for $\psi_a$. For fixed $x\in\R$ and $a>0$, write
$g(s)=e^{ixs}K_a(s)$.  Show the conditions of
Corollary~\ref{corollarytauberian} hold for $g$.  We have
\begin{eqnarray*}
\intinf \abs{s}\abs{g'(s)}\,ds & \leq & \abs{x}\intinf\abs{s}\abs{\psiahat(s)}\,ds
+\intinf\abs{s}\abs{\psiahat\,'(s)}\,ds.
\end{eqnarray*}
From 
Proposition~\ref{prop6.2} and Proposition~\ref{prop6.3},
\begin{eqnarray*}
\intinf s^2\abs{dg'(s)} & \leq & \abs{x}\intinf s^2\abs{d(e^{ixs}\psiahat(s))}
+\intinf s^2\abs{d(e^{ixs}\psiahat\,'(s))}\\
 & \leq & x^2\intinf s^2\abs{\psiahat(s)}\,ds+2\abs{x}\intinf s^2\abs{\psiahat\,'(s)}\,ds
+\intinf s^2\abs{d\psiahat\,'(s)}.
\end{eqnarray*}

Since $\psi$ and $\psihat$ are in $L^1(\R)$ the
pointwise Fourier inversion theorem applies to $\psi$ \eqref{pointwiseinversion}.
From Theorem~\ref{theoremexchange}(f) then $I_a[f](x)
=f\ast\psi_a(x)$.
By approximation with convolutions in norm 
\cite[Theorem~3.4]{talvilaconvolution} the conclusion follows.
\end{proof}

A similar inversion theorem appears in \cite{talvilaLpFourier} for $L^p(\R)$ functions
and $1<p<\infty$.

\begin{example}\label{examplekernels}
The three commonly used summability kernels:
Ces\`aro--Fej\'er ($K_a(s)=(1-a\abs{s})\chi_{[-1/a,1/a]}(s)/(2\pi)$ with 
${\widehat K_a}(t)=2a\sin^2[t/(2a)]/(\pi t^2)$),
Abel--Poisson ($K_a(s)=e^{-a\abs{s}}/\pi$ with ${\widehat K_a}(t)=2a/[\pi(t^2+a^2)]$) and
Gauss--Weierstrass ($K_a(s)=e^{-a^2s^2}/(2\pi)$ with ${\widehat K_a}(t)=e^{-t^2/(4a^2)}/(2\sqrt{\pi}a)$)
satisfy the conditions of Theorem~\ref{theoreminversion}.
While the Dirichlet kernel ($K_a(s)=\chi_{[-1/a,1/a]}(s)/(2\pi)$ with ${\widehat K_a}(t)=\sin(t/a)/(\pi t)$)
does not.

The way summability kernels are defined in Theorem~\ref{theoreminversion} means
the inverses of these kernels are associated with the given names.
\end{example}

If $f$ and $\fhat$ are in $L^1(\R)$ then the pointwise inversion theorem reads
\begin{equation}
f(x)=\frac{1}{2\pi}{\hat \fhat}(-x)=\frac{1}{2\pi}\intinf e^{isx}\fhat(s)\,ds,\label{pointwiseinversion}
\end{equation}
for each $x\in\R$.  For example, \cite[Theorem~8.26]{folland}.  When $f\in L^p(\R)$ for some $1\leq p<\infty$ the Fourier
transform can be defined as $\fhat=\Psif'$, where $\Psif$ is given in Remark~\ref{remarkLp}.
An inversion analogous to the $L^1(\R)$ case above then holds.

\begin{theorem}[Inversion for $L^p(\R)$ functions]\label{theoremLpinversion}
Let $f\in L^p(\R)$ for some $1< p<\infty$.  Let $\Psif(s)=\intinf u_s(t)f(t)\,dt$,
where $u_s(t)=(1-e^{-ist})/(it)$, and define $\fhat=\Psif'$.  Then
\begin{equation}
\frac{1}{2\pi}\frac{d^2}{dx^2}\Omega_{\fhat}(-x)=f(x)\label{dx^2}
\end{equation}
for almost all $x\in\R$.
If $f$ is continuous at $x\in\R$ then \eqref{dx^2} holds at $x$.
If $f$ is bounded in a neighbourhood of $x$ and approximately continuous at $x$
then \eqref{dx^2} holds at $x$.
\end{theorem}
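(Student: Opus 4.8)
The plan is to obtain an explicit formula for $\Omega_{\fhat}$ in terms of $f$ and then to differentiate it twice in the classical sense. For $f\in L^p(\R)$ with $1<p<\infty$ the quantity $\Omega_{\fhat}(s)=\intinf v_s\fhat$ is read through the pairing of $\fhat=\Psif'$ with $v_s$, where $\Psif$ is continuous of polynomial growth. Since $v_s$ is of bounded variation with $v_s(t)=O(1/t)$ and $\partial_t v_s(t)=O(1/t^2)$ as $\abs{t}\to\infty$, while $\Psif(t)=O(\abs{t}^{1/p})$ (from $\Psif(t)=\intinf u_tf$ and $\norm{u_t}_{p'}=\abs{t}^{1/p}\norm{u_1}_{p'}$ by H\"older), the boundary terms $\Psif(t)v_s(t)$ vanish at $\pm\infty$ because $p>1$. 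Integration by parts then gives the convergent Lebesgue integral $\Omega_{\fhat}(s)=-\intinf\Psif(t)\,\partial_t v_s(t)\,dt$.

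Next I would insert $\Psif(t)=\intinf\frac{1-e^{-it\tau}}{i\tau}f(\tau)\,d\tau$ and interchange the order of integration. The double integral converges absolutely for $1<p<\infty$: using $\abs{1-e^{-it\tau}}/\abs{\tau}\leq\min(2/\abs{\tau},\abs{t})$ and H\"older in $\tau$, the $\tau$-integral is bounded by a constant times $\abs{t}^{1/p}$, and $\intinf\abs{t}^{1/p}\abs{\partial_t v_s(t)}\,dt<\infty$ precisely because $p>1$. After Fubini the inner $t$-integral is evaluated using $\intinf\partial_t v_s(t)\,dt=0$ (as $v_s(\pm\infty)=0$) and $\intinf e^{-it\tau}\partial_t v_s(t)\,dt=i\tau\hat v_s(\tau)$, where $\hat v_s(\tau)=\intinf e^{-i\tau t}v_s(t)\,dt$; this collapses the bracket to $\hat v_s(\tau)$ and yields the exchange identity $\Omega_{\fhat}(s)=\intinf f(\tau)\hat v_s(\tau)\,d\tau$. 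Computing the kernel transform from \eqref{vs} via $v_s(t)=\int_0^se^{-i\sigma_1t}(s-\sigma_1)\,d\sigma_1$ and $\intinf e^{-i(\tau+\sigma_1)t}\,dt=2\pi\delta(\tau+\sigma_1)$ gives the ramp $\hat v_s(\tau)=2\pi(s+\tau)\chi_{[-s,0]}(\tau)$ for $s>0$ (and $\hat v_s(\tau)=-2\pi(s+\tau)\chi_{[0,-s]}(\tau)$ for $s<0$), so that
$$
\Omega_{\fhat}(s)=2\pi\int_{-s}^0(s+\tau)f(\tau)\,d\tau\qquad(s>0),
$$
with the reflected formula for $s<0$. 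This is consistent with the second central difference identity \eqref{fejer1}.

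With this formula the differentiation is classical. Writing $\Omega_{\fhat}(s)=2\pi\int_0^s\big(\int_{-\sigma}^0f(\tau)\,d\tau\big)\,d\sigma$ shows $\Omega_{\fhat}\in C^1(\R)$ with $\Omega_{\fhat}'(s)=2\pi\int_{-s}^0f(\tau)\,d\tau$, the integrand being continuous since $f\in L^1_{\rm loc}(\R)$. Differentiating once more, $\Omega_{\fhat}''(s)=2\pi f(-s)$ at every $s$ for which $y\mapsto\int^yf$ is differentiable at $y=-s$ with value $f(-s)$. Putting $s=-x$ and using $\frac{d^2}{dx^2}\Omega_{\fhat}(-x)=\Omega_{\fhat}''(-x)$ gives $\frac{1}{2\pi}\frac{d^2}{dx^2}\Omega_{\fhat}(-x)=f(x)$ at each such $x$. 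The three assertions then follow from standard sufficient conditions for the derivative of an indefinite integral to recover its integrand: the Lebesgue differentiation theorem handles almost every $x$; continuity of $f$ at $x$ handles that point; and for $f$ bounded near $x$ and approximately continuous at $x$, local boundedness upgrades approximate continuity to $h^{-1}\int_x^{x+h}f\to f(x)$, which is exactly what is required.

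I expect the main obstacle to be the exchange identity in the first two steps: justifying both the integration by parts that strips the distributional derivative off $\fhat$ and the Fubini interchange, since $\fhat$ is only a tempered distribution and $f$ need not lie in $\alexc$. Both rest on the growth bound $\Psif(t)=O(\abs{t}^{1/p})$ together with the $O(1/t^2)$ decay of $\partial_t v_s$, and this is where the hypothesis $1<p<\infty$ enters decisively: the lower bound $p>1$ secures integrability at infinity and the vanishing of the boundary terms, while $p<\infty$ is what makes $u_t\in L^{p'}$ and hence $\Psif$ well defined. Once the explicit formula is secured the remaining differentiation theory is routine.
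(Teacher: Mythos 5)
Your proposal is correct and has the same overall architecture as the paper's proof: establish the exchange identity $\Omega_{\fhat}(s)=\intinf \hat{v}_s(\tau)f(\tau)\,d\tau$, observe that $\hat{v}_s$ is an explicit compactly supported ramp, and then recover $f$ by two classical differentiations (Lebesgue differentiation theorem for a.e.\ $x$, with the continuity and approximate-continuity cases handled by the corresponding versions of the fundamental theorem of calculus). The difference lies in how the first two steps are justified. The paper outsources the exchange identity to Theorem~6.1 of the companion work on Lebesgue spaces and computes the kernel transform from the known distributional transforms of $p_{-1}$ and $p_{-2}$, obtaining $\hat{v}_{-x}(y)=-\pi\abs{y}+\pi x\,\sgn(y)+\pi\abs{y-x}$, which is the same ramp you found (your $\hat v_s(\tau)=2\pi(s+\tau)\chi_{[-s,0]}(\tau)$ for $s>0$ agrees with it, and both collapse to $\tfrac{1}{2\pi}\Omega_{\fhat}(-x)=\int_0^x(x-y)f(y)\,dy$). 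You instead prove the exchange directly: integration by parts using $\Psif(t)=O(\abs{t}^{1/p})$ against $v_s(t)=O(1/\abs{t})$ and $\partial_t v_s(t)=O(1/t^2)$, then Fubini with the correct absolute-convergence bound $\norm{u_t}_{p'}=\abs{t}^{1/p}\norm{u_1}_{p'}$. This is a genuine gain in self-containedness, and your estimates are right; in particular the role of $1<p<\infty$ is correctly identified.

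One step needs repair, though it is minor. Your identification of the inner integral with $\hat{v}_s(\tau)$ treats $\hat v_s(\tau)=\intinf e^{-i\tau t}v_s(t)\,dt$ as a convergent integral and then evaluates it with $\intinf e^{-i(\tau+\sigma_1)t}\,dt=2\pi\delta(\tau+\sigma_1)$; but $v_s\notin L^1(\R)$ (it decays only like $1/\abs{t}$), so that integral exists only as an improper, principal-value, or $L^2$ transform, and the delta computation is formal. The clean fix is already contained in \eqref{vs}: substituting $\tau=-\sigma_1$ gives $v_s(t)=\frac{1}{2\pi}\intinf e^{i\tau t}\bigl[2\pi(s+\tau)\chi_{[-s,0]}(\tau)\bigr]\,d\tau$, i.e.\ $v_s$ is by definition the inverse Fourier transform of the continuous, compactly supported ramp, so $\hat v_s$ equals that ramp in the $L^2$ (or tempered) sense, which is all that is needed under the outer $\tau$-integral; alternatively, integrate by parts in the absolutely convergent integral $\intinf(1-e^{-it\tau})\partial_t v_s(t)\,dt$ to see that the improper integral defining $\hat v_s(\tau)$ exists and the collapse is legitimate. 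With that patch your argument is complete.
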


Note that if $2<p<\infty$ then the Fourier transform is a tempered distribution
that need not have any pointwise values.  For example,  \cite[4.13, p.~34]{steinweiss}.
However, formula \eqref{dx^2} returns pointwise values almost everywhere.

\begin{proof}
The function $v_{-x}$ satisfies the conditions of the exchange theorem in 
\cite{talvilaLpFourier} (Theorem~6.1).
Therefore,
$$
\Omega_{\fhat}(-x)=\intinf v_{-x}\fhat=
\intinf {\hat v_{-x}}(y)f(y)\,dy.
$$
With notation from Theorem~\ref{theoremexchange}, 
\begin{eqnarray*}
{\hat v_{-x}}(y) & = & \intinf e^{-iyt}\left(\frac{1+ixt-e^{ixt}}{t^2}\right)dt
={\hat p_{-2}}(y)+ix\,{\hat p_{-1}}(y)-{\hat p_{-2}}(y-x)\\
 & = & -\pi\abs{y}+\pi x\,{\rm sgn}(y)+\pi\abs{y-x}.
\end{eqnarray*}
See, for example, \cite[8.3.7, 8.3.19]{friedlanderjoshi} for the Fourier transforms
of $p_{-1}$ and $p_{-2}$.

If $x\geq 0$ then
$$
\frac{1}{2\pi}\Omega_{\fhat}(-x)=\frac{1}{2}\intinf(-\abs{y}+x\,{\rm sgn}(y)+\abs{y-x})f(y)\,dy
=\int_0^x(x-y)f(y)\,dy.
$$
For each $x\geq 0$ then
$$
\frac{1}{2\pi}\frac{d}{dx}\Omega_{\fhat}(-x)=\int_0^xf(y)\,dy.
$$
And, at points of continuity of $f$, \eqref{dx^2} holds.  Similarly for $x\leq 0$.

For the fundamental theorem of calculus at points of approximate continuity, see,
for example, \cite[Theorem~14.8]{gordon}.
\end{proof}

\section{Examples}\label{sectionexamples}

If $f\in L^1(\R)$ then $\fhat$ is continuous with limit $0$ at $\pm\infty$.
And, if $f\in L^p(\R)$ for some $1<p\leq 2$ then $\fhat\in L^q(\R)$ where
$q$ is the conjugate exponent to $p$.  If $f\in L^p(\R)$ for some $2<p\leq \infty$ 
then $\fhat$ exists as a tempered distribution but need not be a function
\cite[4.13, p.~34]{steinweiss}.
When $1<p<\infty$ then $\fhat$ is the distributional derivative of a H\"older 
continuous function \cite{talvilaLpFourier}.  

The following examples show some
of the different types of behaviour that Fourier transforms can have for $f\in\alexc$ when
$f$ is not in an $L^p(\R)$ space.  In the final example, the exchange theorem
is used to compute an integral that does not appear in tables.

The theorems in this paper apply to $\alexc$.  This space includes functions
that are not in $L^1_{loc}(\R)$, such as $f(x)=x^{-1}\cos(x^{-2})$.
It includes distributions that have no pointwise values.  For example,
there is a function $F\in\balexc$ of Weierstrass type, i.e., the pointwise
derivative of $F$ exists nowhere.  The distributional derivative is $F'\in\alexc$
and $F'$ has a Fourier transform according to  Definition~\ref{defnfhatalexc}.
As well, there is a function $G\in\balexc$, akin to the Cantor--Lebesgue 
function, such that $G\not=0$ and the pointwise derivative $G'(x)=0$ for almost all $x\in\R$.
Then the Lebesgue integral $\intinf e^{-ist}G'(t)\,dt=0$ for all $s$ but
$G'\in\alexc$ and the Fourier transform of $G'$ is well defined and is not $0$.

Various special cases are considered in the following examples.

\begin{example}[$\fhat$ not locally integrable]\label{examplenotlocallyintegrable}
If $f\in\hk$ then part (d) of Theorem~\ref{theoremOmegaf} shows $\Omega_f$ need
not be Lipschitz continuous and thus need not be in $C^1(\R)$.  Then $\fhat$ need
not be the distributional derivative of a continuous function so that
$\int_a^b\fhat$ can fail to exist as an integral in $\alexc$ (and hence also
fail to exist as a Lebesgue, improper Riemann or Henstock--Kurzweil integral).
Similarly, $\fhat$ need not be locally integrable in the sense of regulated
primitive integrals \cite{talvilaregulated}.
\end{example}

Use the notation $e_x(y)=e^{xy}$.

\begin{example}[Pointwise divergence everywhere]\label{examplekolmogorov}
Kolmogorov \cite{kolmogorov1926} gave an example of a periodic $L^1$ function
such that the symmetric partial sums of the Fourier series diverge everywhere.
According to remarks on transference principles \cite[p.~481]{grafakosmodern}, this
implies the existence of $h\in L^1(\R)$ such that 
$\lim_{R\to\infty}\int_{-R}^Re^{ixs}\hhat(s)\,ds$
diverges for each $x\in\R$.
(See also Matt Rosenzweig's and David C. Ullrich's answers to 
a question posed on StackExchange-Mathematics \cite{rosenzweigullrich}.)
  Note that $\hhat\in\balexc$.  Define $f\in\alexc$
by $f=\hhat'$.   Integration by parts shows 
$\lim_{R\to\infty}\int_{-R}^Re_{ix}f$ diverges for each $x\in\R\setminus\{0\}$.
The Hake Theorem \cite[Theorem~25]{talviladenjoy} now shows
$\intinf e_{ix}f$ diverges for each $x\in\R\setminus\{0\}$.
Since $\fhat$ exists in $\Dc$,
Definition~\ref{defnfhatalexc} properly extends the definition of 
the Fourier transform as defined by the integral $\intinf e^{-ist}f(t)\,dt$.
\end{example}

\begin{example}[Compact support]\label{exampleccompactsupport}
If $f\in\alexc$ with primitive $F\in\balexc$ then $f$ is of compact support
if there are $-\infty<\alpha<\beta<\infty$ such that $F$ is $0$ on 
$[-\infty,\alpha]$ and constant on $[\beta,\infty]$.  Then
$\Omega_f(s)=\int_\alpha^\beta v_sf=F(\beta)v_s(\beta)-
\int_\alpha^\beta  v_s'(t)F(t)\,dt$.  It follows by dominated convergence that
$\Omega_f$ is analytic and then so is $\fhat$.

In this case, 
\begin{eqnarray}
\fhat(s) & = & \Omega_f''(s)=\int_\alpha^\beta  
\frac{\partial^2 v_s}{\partial s^2}f=\int_\alpha^\beta e_{is}f\label{compact1}\\
 & = & \frac{\partial^2}{\partial s^2}\left[
F(\beta)v_s(\beta)-
\int_\alpha^\beta  v_s'(t)F(t)\,dt\right]
 =  F(\infty)e^{-is\beta}+is\int_\alpha^\beta e^{-ist}F(t)\,dt.\label{compact2}
\end{eqnarray}
For justification of differentiation inside the integral in 
\eqref{compact1}, see \cite[Theorem~22]{talviladenjoy}.  This shows the
Fourier transform is given by the usual integration formula.  Dominated convergence
allows differentiation inside the integral in \eqref{compact2} and this
represents the Fourier transform as the integration of a continuous function on
a compact interval.  The integral $\fhat=\int_\alpha^\beta e_{is}f$ exists in
$\alexc$ since $e_{is}$ is of bounded variation on $[\alpha,\beta]$.

There is the estimate
$\abs{\fhat(s)}\leq\abs{F(\infty)}+(\beta-\alpha)\abs{s}\norm{F}_\infty$.
The Riemann--Lebesgue lemma shows $\fhat(s)=o(s)$ as $\abs{s}\to\infty$.
Titchmarsh \cite{titchmarshgrowth} has shown this estimate is sharp.
This then provides a more precise estimate than is possible with arbitrary
distributions of compact support (Paley--Wiener--Schwartz Theorem 
\cite[Theorem~10.2.1]{friedlanderjoshi}).
\end{example}

\begin{example}
Let $f(x)=\sin(x)/\abs{x}$.  Then $f\in\hk$ and
$\fhat(s)=i\log(\abs{s-1}/\abs{s+1})$ \cite[2.6(1)]{erdelyi}.
Hence, Fourier transforms in $\alexc$ can
be unbounded on compact intervals.
\end{example}

Even when $\fhat$ is continuous it need not be in $\hk$ or any $L^p(\R)$ space.
\begin{example}\label{exampleAMM}
Let $f(x)=x^2e^{ix^4}$ for $x\geq 0$ and $f(x)=0$ for $x\leq 0$.  Integration
by parts establishes that $f\in\hk$ but 
for no $1\leq p\leq\infty$ is
$f\in L^p(\R)$.  Integration by parts also shows $\fhat$ is continuous.  
For $s>0$ we can write
$$
\fhat(s)=\int_0^\infty t^2e^{i(t^4-st)}\,dt=s\int_0^\infty t^2e^{is^{4/3}(t^4-t)}\,dt.
$$
The function $\phi(x)=x^4-x$ has its minimum at $x=4^{-1/3}$.  By the method of
stationary phase there are non-zero constants $A$ and $B$ such that the
asymptotic behaviour is
$$
\fhat(s)\sim As^{1/3}e^{iBs^{4/3}} \quad\text{ as }
s\to\infty.
$$
It follows that $\fhat$ is not in $\hk$ or any $L^p(\R)$ space, for $1\leq p\leq\infty$.
Notice also the failure of the Riemann--Lebesgue lemma since $\fhat$ does not
have a limit at infinity.  

See \cite{talvilamaarapid} for an example of an $\hk$
Fourier transform having arbitrarily large pointwise growth on a sequence.
\end{example}

\begin{example}\label{exampleintegralcomputation}
The exchange theorem can be used to evaluate definite integrals.

Let $f(t)=\abs{t}^{\nu-1}\sin(x\abs{t}+\nu\pi/2)$ with $x>0$ and $0<Re(\nu)<1$.
Then $f\in\hk$ and for each
$1\leq p\leq\infty$ we have $f\not\in L^p(\R)$.  The Fourier transform is
\cite[1.6(2), 1.6(17)]{erdelyi}
$$
\fhat(s)=\Gamma(\nu)\sin(\nu\pi)\left\{\begin{array}{cl}
(x+s)^{-\nu}+(x-s)^{-\nu}, & 0<s<x\\
(x+s)^{-\nu}, & s>x,
\end{array}
\right.
$$
with $\fhat$ extended as an even function to $(-\infty,0)$.

Let $g(t)=\log(1+y^2/t^2)$ for $y>0$.  Then $\ghat(s)=2\pi(1-e^{-y\abs{s}})/\abs{s}$
\cite[1.5(10)]{erdelyi}.  And, $g$ satisfies the conditions of Theorem~\ref{theoremexchange}.

To compute $\intinf f(t)\ghat(t)\,dt$ note that
the integral $\int_0^\infty t^{\nu-2}\sin(xt+\nu\pi/2)\,dt$ exists for $1<Re(\nu)<2$ and
the integral
$\int_0^\infty t^{\nu-2}\sin(xt+\nu\pi/2)e^{-yt}\,dt$ exists for
$Re(\nu)>1$ but their difference exists for $0<Re(\nu)<2$ and this lets
us compute
\begin{align*}
&\intinf f(t)\ghat(t)\,dt=4\pi\int_0^\infty t^{\nu-2}\sin(xt+\nu\pi/2)(1-e^{-yt})\,dt\\
&\quad=\frac{4\pi^2\cos(\nu\pi)x^{1-\nu}}
{\Gamma(2-\nu)\sin(\nu\pi)}
-\frac{4\pi^2(x^2+y^2)^{(1-\nu)/2}\sin[(1-\nu)\arctan(x/y)-\nu\pi/2]}
{\Gamma(2-\nu)\sin(\nu\pi)}.
\end{align*}
The gamma functions in \cite[1.3(1), 1.4(7), 2.3(1), 2.4(7)]{erdelyi} have been
written so that these formulas can be analytically continued to the region $0<Re(\nu)<2$.

After a linear change of variables,
$\intinf\fhat(s)g(s)\,ds=2\Gamma(\nu)\sin(\nu\pi)\int_0^\infty s^{-\nu}
\log(1+y^2/(s-x)^2)\,ds$.  The
exchange theorem gives
\begin{align}
&\int_0^\infty s^{-\nu}\log(1+y^2/(s-x)^2)\,ds\label{exchangeparts}\\
&\quad=\frac{2\pi^2\cos(\nu\pi)x^{1-\nu}}
{\Gamma(\nu)\Gamma(2-\nu)\sin^2(\nu\pi)}
-\frac{2\pi^2(x^2+y^2)^{(1-\nu)/2}\sin[(1-\nu)\arctan(x/y)-\nu\pi/2]}
{\Gamma(\nu)\Gamma(2-\nu)\sin^2(\nu\pi)}.
\end{align}

This integral does not appear in \cite{apelblat}, \cite{erdelyi}, \cite{gradshteyn},
\cite{grobnerhofreiter}
or \cite{prudnikov}.

If $x<0$ then integration by parts can be applied in \eqref{exchangeparts} and the 
resulting integrals
appear as \cite[6.2(3), 6.2(12)]{erdelyi}.  The formulas here can be analytically
continued to the region  $0<Re(\nu)<1$ using gamma function identities.
\end{example}

\section{Convolution}\label{sectionconvolution}
If $f_1,f_2\in L^1(\R)$ then $\widehat{f_1\ast f_2}=\widehat{f_1}\widehat{f_2}$.
Versions of this formula are given in $\alexc$.
Recall Definition~\ref{defnpn}, $p_n(x)=x^n$.

\begin{theorem}\label{theoremconvolution1}
Let $f\in\alexc$.  Let $g_1$ and $g_2$ be functions such that
(A) $g_1$ is absolutely continuous,
(B) $p_2g_1\in L^1(\R)$,
(C) $p_1g_1\in\bv$,
(D) $p_2g_1'\in L^1(\R)$,
(E) $p_1g_1'\in\bv$,
(F) $p_2g_1'\in\bv$,
(G) $g_2$ is absolutely continuous,
(H) $p_1g_2\in L^1(\R)$,
(I) $p_2g_2\in\bv$,
(J) $p_2g_2'\in\bv$,
(K) $g_2'\in\bv$.
Then $\intinf\widehat{f\ast g_1}g_2=\intinf\fhat\gonehat g_2$.
\end{theorem}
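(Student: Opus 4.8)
The plan is to collapse both sides of the identity onto the common expression $\intinf f\,\widehat{\gonehat g_2}$ by invoking the exchange formula of Theorem~\ref{theoremexchange}(f) twice, with a single interchange of integration order in between. First I would record the structural facts that drive the argument. Conditions (A)--(F) are exactly the hypotheses of Corollary~\ref{corollaryexchange} for $g_1$: (B) forces $g_1\in L^1(\R)$, (A) together with (D) gives $g_1'\in L^1(\R)$ so that $g_1\in\bv$, and (C),(F) supply $p_1g_1\in\bv$, $p_2g_1'\in\bv$; hence $\gonehat\in\bv\cap C(\R)$. Moreover (B) (namely $p_2g_1\in L^1(\R)$) makes $\gonehat$ twice continuously differentiable with bounded derivatives, a fact I will need later. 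Since $g_1\in\bv\cap L^1(\R)$, the convolution $f\ast g_1$ lies in $\alexc$ by the convolution theory for the continuous primitive integral \cite{talvilaconvolution}. A parallel check shows conditions (G)--(K) place $g_2$ within Corollary~\ref{corollaryexchange} as well (using (H),(I) to get $p_1g_2\in\bv$ and (J) for $p_2g_2'\in\bv$), so $\gtwohat\in\bv\cap C(\R)$; and (J),(K) force $\gtwohat(s)=O(s^{-2})$, whence $\gtwohat\in L^1(\R)$.

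For the left-hand side I would apply the exchange formula to the distribution $\widehat{f\ast g_1}$ against $g_2$, obtaining $\intinf\widehat{f\ast g_1}\,g_2=\intinf(f\ast g_1)\gtwohat$. Writing the convolution as the $\alexc$ integral $(f\ast g_1)(x)=\intinf f(t)g_1(x-t)\,dt$, legitimate since $t\mapsto g_1(x-t)\in\bv$, I would then interchange the order of integration by Theorem~\ref{theoremfubini}, taking $f$ as the integrable distribution (in the variable $t$) and $G(t,x)=g_1(x-t)\gtwohat(x)$. Hypotheses (i)--(iii) hold because $t\mapsto G(t,x)$ has variation $\abs{\gtwohat(x)}\,{\rm var}(g_1)$ and $\abs{G(t,x)}\le\norm{g_1}_\infty\abs{\gtwohat(x)}$, both dominated in $x$ by the $L^1(\R)$ function $\norm{g_1}_\infty\abs{\gtwohat}$. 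This produces $\intinf f(t)\bigl[\intinf g_1(x-t)\gtwohat(x)\,dx\bigr]\,dt$, and an ordinary Fubini computation on the $L^1(\R)$ functions $g_1,g_2$ identifies the inner integral as $\intinf g_1(x-t)\gtwohat(x)\,dx=\widehat{\gonehat g_2}(t)$. Hence the left-hand side equals $\intinf f\,\widehat{\gonehat g_2}$.

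For the right-hand side I would apply the exchange formula a second time, now to $\fhat$ against the function $\gonehat g_2$, giving $\intinf\fhat\,\gonehat g_2=\intinf f\,\widehat{\gonehat g_2}$. Comparing the two paragraphs yields $\intinf\widehat{f\ast g_1}\,g_2=\intinf\fhat\,\gonehat g_2$, as required. Conceptually this is just the distributional convolution theorem $\widehat{f\ast g_1}=\fhat\gonehat$ tested against $g_2$, with $\gonehat\in C^2(\R)$ acting as a legitimate multiplier; the exchange--Fubini route keeps the whole argument inside the continuous primitive integral and accommodates a test object $g_2$ that is only of bounded-variation type rather than a Schwartz function.

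The main obstacle is the second application of the exchange formula: one must verify that the product $\gonehat g_2$ itself satisfies the hypotheses, i.e.\ $\gonehat g_2\in L^1(\R)$, $p_1(\gonehat g_2)\in\bv$, and above all $p_2(\gonehat g_2)'\in\bv$. Using the Leibniz rule $(\gonehat g_2)'=\gonehat' g_2+\gonehat g_2'$ and the variation product estimate ${\rm var}(uv)\le\norm{u}_\infty{\rm var}(v)+\norm{v}_\infty{\rm var}(u)$, this reduces to controlling $p_2g_2$ and $p_2g_2'$ against the bounded smooth factors $\gonehat,\gonehat',\gonehat''$ (finite by the smoothness of $\gonehat$ coming from (B)) and controlling $p_1g_2$, $p_2g_2$ in $L^1(\R)$ against $\gonehat'$, $\gonehat''$. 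This is precisely where the asymmetric, stronger conditions (G)--(K) on $g_2$ are consumed: they are heavier than (A)--(F) exactly because $g_2$ must survive multiplication by the bounded-variation factor $\gonehat$. The companion verification that $\gtwohat\in L^1(\R)$, needed to license the Fubini interchange, is the other point at which the fine-tuned hypotheses are used up.
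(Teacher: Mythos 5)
Your skeleton is the same as the paper's: apply Corollary~\ref{corollaryexchange} to $g_2$ to get $\intinf\widehat{f\ast g_1}g_2=\intinf (f\ast g_1)\gtwohat$, apply it again to $\gonehat g_2$ to get $\intinf\fhat\gonehat g_2=\intinf f\,\widehat{\gonehat g_2}$, and join the two with Theorem~\ref{theoremfubini} plus the classical identity $\widehat{\gonehat g_2}(t)=\intinf g_1(x-t)\gtwohat(x)\,dx$ (the paper writes this as $\widehat{\gonehat g_2}=\widetilde{g_1}\ast\gtwohat$); your $O(s^{-2})$ decay argument for $\gtwohat\in L^1(\R)$ from (J),(K) is a sound substitute for the paper's citation, and obtaining $\gonehat\in\bv$ by feeding $g_1$ itself into Corollary~\ref{corollaryexchange} is a legitimate shortcut. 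However, the decisive verification has a genuine gap. To put $\gonehat g_2$ into Corollary~\ref{corollaryexchange} you must show $p_2(\gonehat g_2)'\in\bv$, and by Leibniz this contains the term $\gonehat'\,p_2g_2$, whose product estimate ${\rm var}(\gonehat'p_2g_2)\leq\norm{\gonehat'}_\infty{\rm var}(p_2g_2)+\norm{p_2g_2}_\infty{\rm var}(\gonehat')$ requires ${\rm var}(\gonehat')<\infty$, i.e.\ $\gonehat'\in\bv$. You establish only that $\gonehat'$ and $\gonehat''$ are \emph{bounded} (from (B)), and boundedness of a function and of its derivative does not give finite variation on all of $\R$ --- $\sin$ is the standard counterexample --- so ``bounded smooth factors'' cannot close this step. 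This is exactly the point the paper isolates (``It remains to show $\gonehat'\in\bv$,'' derived from (A)--(D) and (F) via Proposition~7.2 of the companion $L^p$ paper); tellingly, your write-up never consumes hypothesis (E) and uses (D) only to re-derive $g_1\in\bv$. (A minor slip in the same spirit: $p_1g_2\in\bv$ comes from (G) and (I), not (H) and (I); near the origin it is absolute continuity, not an $L^1$ condition, that controls the variation.)

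The gap is repairable with the hypotheses at hand, by the same device you used for $\gtwohat$. Differentiating under the integral (justified by (B)) gives $\gonehat''=-\widehat{p_2g_1}$. Set $h=p_2g_1$. Then $h\in L^1(\R)$ by (B); $h'=2p_1g_1+p_2g_1'\in L^1(\R)$, since $p_1g_1\in L^1(\R)$ follows from (A),(B) and $p_2g_1'\in L^1(\R)$ is (D); and $h'\in\bv$ by (C) and (F). Since $h,h'\in L^1(\R)$ force $h(\pm\infty)=0$, and $h'\in L^1(\R)\cap\bv$ forces $h'(\pm\infty)=0$, two integrations by parts yield $\abs{\hhat(s)}\leq{\rm var}(h')/s^2$; as $\hhat$ is also bounded and continuous, $\gonehat''=-\hhat\in L^1(\R)$, whence ${\rm var}(\gonehat')=\norm{\gonehat''}_1<\infty$. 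With $\gonehat'\in\bv$ in hand your Leibniz reduction closes, and the rest of your proposal --- both exchange applications and the Fubini interchange, which correctly requires $g_1\in\bv$ and $\gtwohat\in L^1(\R)$ --- goes through as written.
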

A simplified set of assumptions appears in Corollary~\ref{corollaryconvolution}.
\begin{proof}
By (A) and (B), $g_1\in L^1$.
Since $f\in\alexc$ the convolution is then well defined and $f\ast g_1\in\alexc$ 
\cite[Definition~3.1]{talvilaconvolution}.  Notice that $g_2$ satisfies the
conditions of Corollary~\ref{corollaryexchange}.  Then 
$\intinf\widehat{f\ast g_1}g_2=\intinf f\ast g_1 \gtwohat$.

Now show
$\gonehat g_2$ satisfies the conditions of 
Corollary~\ref{corollaryexchange}.  We have $\norm{\gonehat g_2}_1\leq\norm{g_1}_1\norm{g_2}_1$.
From (A) and (B), $\gonehat\in C^1(\R)$ so $\gonehat g_2$ is absolutely continuous on $\R$.
From (G) and (I), $p_1g_2$ is of bounded variation, and
${\rm var}(p_1\gonehat g_2)\leq {\rm var}(\gonehat)\norm{p_1g_2}_\infty+\norm{\gonehat}_\infty
{\rm var}(p_1g_2)$.  Hence,
if $\gonehat\in\bv$ then $p_1\gonehat g_2\in\bv$.  This follows from
(A)-(E) and \cite[Proposition~7.2]{talvilaLpFourier}(d).  And, 
$$
{\rm var}(p_2(\gonehat g_2)') \leq {\rm var}(\gonehat')\norm{p_2g_2}_\infty + \norm{\gonehat'}_\infty
{\rm var}(p_2g_2)+{\rm var}(\gonehat)\norm{p_2g_2'}_\infty +\norm{\gonehat}_\infty{\rm var}(p_2g_2').
$$
It remains to show $\gonehat'\in\bv$.  This follows, as in \cite[Proposition~7.2]{talvilaLpFourier}(d),
by (A)-(D) and (F).
Hence,
$
\intinf \fhat \gonehat g_2=\intinf f \widehat{\gonehat g_2}
$.

A calculation
with the Fubini--Tonelli theorem shows
that, since $g_1, g_2\in L^1(\R)$, then $\widehat{\gonehat g_2}=\widetilde{g_1}\ast \gtwohat$,
where $\tilde{g}(x)=g(-x)$.
Employing the Fubini theorem for $\alexc$, Theorem~\ref{theoremfubini},
$$
\intinf f\widehat{\gonehat g_2}  =  \intinf f(\cdot)\intinf g_1(u-\cdot)
\gtwohat(u)\,du=\intinf f\ast g_1\gtwohat.
$$
This proposition
requires $g_1\in\bv$ ((A), (C)) and $\gtwohat\in L^1(\R)$.  The latter follows from
(G)-(H), (J) and \cite[Proposition~7.2]{talvilaLpFourier}(c).
\end{proof}

\begin{corollary}\label{corollaryconvolution}
Let $f\in\alexc$.  Let $g_1, g_2\in C^2(\R)$ such that 
$\int_{\abs{t}>1}\abs{t^2g_k^{(n)}(t)}dt<\infty$
for $n=0, 1, 2$ and $k=1, 2$.  Then 
$\intinf\widehat{f\ast g_1}g_2=\intinf\fhat\gonehat g_2$.
\end{corollary}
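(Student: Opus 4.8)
The plan is to show that the simple hypotheses on $g_1$ and $g_2$ imply all of the conditions (A)--(K) of Theorem~\ref{theoremconvolution1}, after which the conclusion follows at once from that theorem. The single technical device throughout is to split each integral over $\R$ into the compact piece on $[-1,1]$ and the two tails $\{|t|>1\}$. On $[-1,1]$ every function in sight is continuous, hence bounded, so it contributes a finite amount; on the tails the inequality $t^2\geq 1$ lets me dominate any lower power of $t$ by $t^2$, so the hypothesis $\int_{|t|>1}|t^2 g_k^{(n)}(t)|\,dt<\infty$ controls everything.

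First I would dispose of the integrability and absolute-continuity conditions. For the $L^1$ conditions (B), (D) and (H) the required integrands are exactly $p_2g_1$, $p_2g_1'$ and $p_1g_2$; each is bounded on $[-1,1]$ and dominated on the tails by $|t^2g_1|$, $|t^2g_1'|$ and $|t^2g_2|$ respectively, so the hypotheses give finiteness immediately. The absolute continuity in (A) and (G) follows because $g_1,g_2\in C^1(\R)$ with $g_1',g_2'\in L^1(\R)$ (again by tail domination, since $t^2\geq 1$), so each $g_k$ is the indefinite integral of its derivative.

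Next I would treat the bounded-variation conditions (C), (E), (F), (I), (J) and (K). The key observation is that each of $p_1g_1$, $p_1g_1'$, $p_2g_1'$, $p_2g_2$, $p_2g_2'$ and $g_2'$ is $C^1$ on $\R$, being a product of a monomial with a $C^1$ function; consequently each has total variation equal to the $L^1$ norm of its derivative, and has limits at $\pm\infty$, so it lies in $\bv$ precisely when that derivative is integrable. I would then compute each derivative by the product rule---for instance $(p_2g_1')'=2p_1g_1'+p_2g_1''$---and bound its $L^1$ norm termwise. Every term that appears is, on the tails, of the form $t^jg_k^{(n)}(t)$ with $0\leq j\leq 2$ and $0\leq n\leq 2$, hence dominated by $|t^2g_k^{(n)}(t)|$, while on $[-1,1]$ it is bounded; so each derivative is in $L^1(\R)$ and the corresponding function lies in $\bv$.

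The only point requiring genuine care---and the place I expect to spend most of the writing---is justifying that ``$C^1$ with $L^1$ derivative'' really yields membership in $\bv$ in the sense used by Theorem~\ref{theoremconvolution1}, that is, that the total variation is computed by $\int_\R|(\cdot)'|$ and that the finite limits at $\pm\infty$ exist. Granting this standard fact, the verification of (A)--(K) is entirely routine, and Theorem~\ref{theoremconvolution1} then gives $\intinf\widehat{f\ast g_1}g_2=\intinf\fhat\gonehat g_2$.
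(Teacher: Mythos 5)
Your proposal is correct and follows exactly the route the paper intends: the corollary is stated as an immediate specialization of Theorem~\ref{theoremconvolution1}, and the (implicit) proof is precisely your verification that the tail conditions $\int_{\abs{t}>1}\abs{t^2g_k^{(n)}(t)}\,dt<\infty$, together with continuity of $g_k, g_k', g_k''$ on $[-1,1]$ and the domination $\abs{t}^j\leq t^2$ for $0\leq j\leq 2$ on $\{\abs{t}>1\}$, yield all of (A)--(K). The one point you flag for care---that a $C^1$ function with derivative in $L^1(\R)$ has total variation $\int_\R\abs{(\cdot)'}$ and finite limits at $\pm\infty$, hence lies in $\bv$---is indeed the standard fact needed, and granting it your argument is complete.
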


\begin{theorem}
Let $f\in\alexc$.  Let $g_1$ and $g_2$ be absolutely continuous functions such that
$p_1g_1, p_1g_2\in L^1(\R)$ and $p_2g_1', p_2g_2'\in L^1(\R)$.
Then $\intinf \fhat g_1\ast g_2=\intinf f\gonehat\gtwohat$.
\end{theorem}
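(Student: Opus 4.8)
The plan is to recognise the right-hand side as a single instance of the exchange formula. Writing $G=g_1\ast g_2$, I would first verify that $G$ satisfies the hypotheses of Corollary~\ref{corollaryexchange}, so that $\widehat G\in\bv\cap C(\R)$ and $\intinf\fhat G=\intinf f\,\widehat G$. Since the reductions below give $g_1,g_2\in L^1(\R)$, the classical $L^1$ convolution theorem yields $\widehat{g_1\ast g_2}=\gonehat\gtwohat$, and the theorem follows by substituting this into the exchange formula. Thus the whole problem is to show that the convolution $G$ meets the three conditions ($G\in L^1(\R)$ and absolutely continuous, $p_1G\in\bv$, $p_2G'\in\bv$) of that corollary.

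Before applying the corollary I would record several membership facts deduced from the hypotheses by splitting $\R$ at $\pm 1$. Because each $g_k$ is absolutely continuous with $g_k'\in L^1_{loc}(\R)$ and $p_2g_k'\in L^1(\R)$, the bound $\abs{g_k'}\leq\abs{p_2g_k'}$ for $\abs{x}\geq1$ gives $g_k'\in L^1(\R)$, hence $g_k$ has limit $0$ at $\pm\infty$; combining this with $p_1g_k\in L^1(\R)$ and the boundedness of $g_k$ on $[-1,1]$ yields $g_k\in L^1(\R)$. The same splitting gives $p_1g_k'\in L^1(\R)$. In particular $G=g_1\ast g_2\in L^1(\R)$, and $G$ is absolutely continuous since $G'=g_1'\ast g_2\in L^1(\R)$; differentiating the other factor shows $G'$ is itself absolutely continuous with $G''=g_1'\ast g_2'\in L^1(\R)$.

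To check the two remaining conditions I would expand $p_n(x)$ across a convolution, writing $x^n h_1(x-t)h_2(t)$ in powers of $(x-t)$ and $t$ to express each weighted convolution as a sum of convolutions of the controlled functions. For $p_1G$, differentiation gives $(p_1G)'=G+p_1G'$, and
\begin{equation*}
p_1G'=p_1(g_1'\ast g_2)=(p_1g_1')\ast g_2+g_1'\ast(p_1g_2),
\end{equation*}
each term a convolution of two $L^1(\R)$ functions, so $(p_1G)'\in L^1(\R)$ and $p_1G\in\bv$. The main obstacle is $p_2G'\in\bv$: the direct expansion of $p_2(g_1'\ast g_2)$ would require $p_2g_2\in L^1(\R)$, which is \emph{not} among the hypotheses. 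I would circumvent this by differentiating first, using $(p_2G')'=2p_1G'+p_2G''$ and then
\begin{equation*}
p_2G''=p_2(g_1'\ast g_2')=(p_2g_1')\ast g_2'+2(p_1g_1')\ast(p_1g_2')+g_1'\ast(p_2g_2').
\end{equation*}
Every factor occurring here lies in $L^1(\R)$ by the preliminary reductions, so $(p_2G')'\in L^1(\R)$; since $p_2G'$ is locally absolutely continuous this forces $p_2G'\in\bv$. With all hypotheses of Corollary~\ref{corollaryexchange} verified, the exchange formula together with $\widehat{g_1\ast g_2}=\gonehat\gtwohat$ completes the proof.
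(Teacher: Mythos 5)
Your proposal is correct and follows essentially the same route as the paper: both reduce the theorem to Corollary~\ref{corollaryexchange} applied to $g_1\ast g_2$, derive the auxiliary memberships $g_k, g_k', p_1g_k'\in L^1(\R)$ from the hypotheses, and verify $p_1(g_1\ast g_2)\in\bv$ and $p_2(g_1\ast g_2)'\in\bv$ by writing $(p_2G')'=2p_1G'+p_2G''$ with $G''=g_1'\ast g_2'$ and expanding the weights $p_1,p_2$ binomially across the convolutions (the paper phrases this as variation estimates with $\abs{s}\leq\abs{s-t}+\abs{t}$ and $s^2\leq(s-t)^2+2\abs{s-t}\abs{t}+t^2$, which is your identity in inequality form). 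Your observation that the naive expansion of $p_2(g_1'\ast g_2)$ would demand the unavailable hypothesis $p_2g_2\in L^1(\R)$, and the fix of differentiating once more, is precisely the device used in the paper's proof.
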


\begin{proof}
The hypotheses show that $g_1, g_2\in L^1(\R)$; $g_1', g_2'\in L^1(\R)$ and
 $p_1g_1', p_1g_2'\in L^1(\R)$.
Let $g=g_1\ast g_2$.  Then $g\in L^1(\R)$ and is
absolutely continuous.  To use Corollary~\ref{corollaryexchange} show that
$p_1g$ and $p_2g'$ are of bounded variation.

We have
\begin{eqnarray*}
{\rm var}(p_1g) & \leq & \intinf \abs{s}\intinf\abs{g_1(s-t)g_2'(t)}\,dt\,ds
+  \intinf\intinf\abs{g_1(s-t)g_2(t)}\,dt\,ds\\
 & \leq & \intinf(\abs{s-t}+\abs{t})\intinf\abs{g_1(s-t)g_2'(t)}\,dt\,ds
+\norm{g_1}_1\norm{g_2}_1\\
 & = & \norm{p_1g_1}_1\norm{g_2'}_1+\norm{g_1}_1\norm{p_1g_2'}_1+\norm{g_1}_1\norm{g_2}_1.
\end{eqnarray*}

And,
\begin{align*}
&{\rm var}(p_2g')  \leq  2\intinf \abs{s}\intinf\abs{g_1(s-t)g_2'(t)}\,dt\,ds
+\intinf s^2\intinf\abs{g_1'(s-t)g_2'(t)}\,dt\,ds\\
 & \leq  2\intinf(\abs{s-t}+\abs{t})\intinf\abs{g_1(s-t)g_2'(t)}\,dt\,ds\\
  & \quad+\intinf((s-t)^2+2\abs{s-t}\abs{t}+t^2)\intinf\abs{g_1'(s-t)g_2'(t)}\,dt\,ds\\
 & \leq  2\norm{p_1g_1}_1\norm{g_2'}_1+2\norm{g_1}_1\norm{p_1g_2'}_1
+\norm{p_2g_1'}_1\norm{g_2'}_1
+2\norm{p_1g_1'}_1\norm{p_1g_2'}_1
  +\norm{g_1'}_1\norm{p_2g_2'}_1.
\end{align*}
\end{proof}


\begin{thebibliography}{99}
\bibitem{apelblat}
A. Apelblat, {\it Table of definite and infinite integrals},
Amsterdam, Elsevier, 1983.
\bibitem{appell}
J. Appell, J. Banas and N.J. Merentes D\'{i}az, {\it Bounded variation and around},
Berlin, De Gruyter, 2014.
\bibitem{arredondomendozreyes}
J.H. Arredondo, F.J. Mendoza and A. Reyes,
{\it On the norm continuity of the HK-Fourier transform},
Electron. Res. Announc. Math. Sci. {\bf 25}(2018), 36--47.
\bibitem{arredondoreyes}
J.H. Arredondo and A. Reyes, 
{\it Interpolation theory for the HK-Fourier transform},
Rev. Un. Mat. Argentina {\bf 62}(2021), 401--413.
\bibitem{bochner}
S. Bochner, {\it Lectures on Fourier Integrals}, 
Annals of Mathematics Studies, Number 42
(M. Tenenbaum and H. Pollard, trans.),
Princeton, Princeton University Press, 1959.
\bibitem{chinnaraman}
G. Chinnaraman,
{\it Integrable distributions and $\phi$-Fourier transform},
Novi Sad J. Math. {\bf 43}(2013), 21--37.
\bibitem{donoghue}
W.F. Donoghue, Jr., {\it Distributions and Fourier transforms},
New York, Academic Press, 1969.
\bibitem{erdelyi}
A. Erd\'elyi (Ed.), {\it Tables of integral transforms, vol. I},
New York, McGraw-Hill, 1954.
\bibitem{folland}
G.B. Folland, {\it Real analysis}, New York, Wiley, 1999.
\bibitem{friedlanderjoshi}
F.G. Friedlander and M. Joshi,
{\it Introduction to the theory of distributions},
Cambridge, Cambridge University Press, 1999.
\bibitem{gordon}
R.A. Gordon, {\it The integrals of Lebesgue, Denjoy, Perron, and 
Henstock}, Providence, 
American Mathematical Society, 1994.
\bibitem{gradshteyn}
I.S. Gradshteyn and I.M. Ryzhik,
{\it Table of integrals, series and products}
(trans. Scripta Technica, Inc., eds.
A. Jeffrey and D. Zwillinger), San Diego,
Academic Press, 2015.
\bibitem{grafakosclassical}
L. Grafakos, {\it Classical Fourier analysis},
New York, Springer, 2008.
\bibitem{grafakosmodern}
L. Grafakos, {\it Modern Fourier analysis},
New York, Springer, 2009.
\bibitem{grobnerhofreiter}
W. Gr\"obner and N. Hofreiter, {\it Integraltafel.  Bestimmte integrale},
Wien, Springer-Verlag, 1973.
\bibitem{hahn1925}
H. Hahn, {\it
\"Uber die Methode der arithmetischen Mittel in der Theorie der verallgemeinerten
Fourier'schen Integrale},
Sitzungsberichte Wien {\bf 134}(1925), 449--470.
\bibitem{kolmogorov1926}
A.N. Kolmogorov,  {\it Une s\'erie de Fourier--Lebesgue divergente partout},
C. R. Acad. Sci. Paris {\bf 183}(1926), 1327--1328.
\bibitem{mahantaray}
S. Mahanta and S. Ray, {\it
On the generalisation of Henstock-Kurzweil Fourier transform},
J. Class. Anal. {\bf 20}(2022), 117--130.
\bibitem{mcleod}
R.M. McLeod, {\it The generalized Riemann integral}, Washington,
The Mathematical Association of America, 1980.
\bibitem{monteiroslaviktvrdy}
G.A. Monteiro, A. Slav\'ik and M. Tvrd\'y, 
{\it Kurzweil-Stieltjes integral},
Hackensack, NJ, World Scientific, 2019.
\bibitem{prudnikov}
A.P. Prudnikov, Yu.A. Brychkov and O.I. Marichev,
{\it Integrals and series, vol. 1} (trans. N.M. Queen),
New York, Gordon and Breach, 1986.
\bibitem{rosenzweigullrich}
M. Rosenzweig and D.C. Ullrich,
{\it Replicating Kolmogorov's counterexample for Fourier series in context of 
Fourier transforms},
Stack Exchange - Mathematics, June 25, 2015.
\url{https://math.stackexchange.com/}.
\bibitem{steinweiss}
E.M. Stein and G. Weiss, {\it Introduction to Fourier analysis on Euclidean spaces},
Princeton, Princeton University Press, 1971.
\bibitem{swartz}
C. Swartz, {\it Introduction to gauge integrals}, Singapore, World Scientific,
2001.
\bibitem{talvilamaarapid}
E. Talvila, {\it Rapidly growing Fourier integrals}, Amer. Math. Monthly, {\bf 108}(2001) 
636--641.
\bibitem{talvilafourierijm}
E. Talvila,
{\it Henstock-Kurzweil Fourier transforms},
Illinois J. Math. {\bf 46}(2002), 1207--1226.
\bibitem{talviladenjoy}
E.  Talvila, {\it  The distributional Denjoy  integral}, Real Anal. Exchange
{\bf 33}(2008), 51--82.
\bibitem{talvilaconvolution}
E. Talvila, {\it Convolutions with the continuous primitive integral},
Abstr. Appl. Anal. (2009) Art. ID 307404, 18 pp.
\bibitem{talvilaregulated}
E. Talvila, {\it The regulated primitive integral}, Illinois J. Math.
{\bf 53}(2009), 1187--1219.
\bibitem{talvilaacrn}
E. Talvila, {\it Integrals and Banach spaces for finite order distributions},
Czechoslovak Math. J. {\bf 62}(137) (2012), no. 1, 77--104.
\bibitem{talvilaijmerrata}
E. Talvila,
{\it Errata for ``Henstock-Kurzweil Fourier transforms''},
Illinois J. Math. {\bf 66}(2022), 647.
\bibitem{talvilaSlovaca}
E. Talvila,
{\it Fourier transform inversion: bounded variation, polynomial growth, 
Henstock-Stieltjes integration},
Math. Slovaca {\bf 73}(2023), no. 1, 131--146.
\bibitem{talvilaLpFourier}
E. Talvila, {\it The Fourier transform in Lebesgue spaces},
Czechoslovak Math. J. (to appear).
\bibitem{titchmarshgrowth}
E.C. Titchmarsh, {\it The order of magnitude of the coefficients
in a generalised Fourier series}, Proc. London Math. Soc.(2) 
{\bf 22}(1923/1924) xxv--xxvi.
\bibitem{zygmundII}
A. Zygmund, {\it Trigonometric series, vol. II},
Cambridge, Cambridge University Press, 1959.
\end{thebibliography}
\end{document}